\DeclareMathOperator{\per}{PER}
\DeclareMathOperator{\dist}{dist}
\DeclareMathOperator{\proj}{proj}
\newtheorem{theorem}{Theorem}
\newtheorem{lemma}{Lemma}
\newtheorem{definition}{Definition}
\newtheorem{prop}{Proposition}
\newtheorem{corollary}{Corollary}[theorem]
\newtheorem{remark}{Remark}
\newtheoremstyle{named}{}{}{\itshape}{}{\bfseries}{.}{.5em}{\thmnote{#3's }#1}
\theoremstyle{named}
\title{Strong greedoid structure of $r$-removed $P$-orderings}
\author{Dmitrii Krachun\footnotemark[1]\footnote{Princeton University} \, and\, Rozalina Mirgalimova\footnotemark[2]\footnote{St. Petersburg State University}}
\begin{document}
\maketitle
\begin{abstract}
    Inspired by the notion of 
    \emph{$r$-removed $P$-orderings} introduced in the setting of Dedekind domains by Bhargava \cite{Bha09-1} we study its generalization in the framework of arbitrary (generalised) ultrametric spaces. We show that sets of maximal "$r$-removed perimeter" can be constructed by a greedy algorithm and form a strong greedoid. This gives a simplified proof of several theorems in \cite{Bha09-1} and also generalises the results of \cite{GP21} which considered the case $r=0$ corresponding, in turn, to simple $P$-orderings of \cite{Bha97}.
\end{abstract}

\section{Introduction}

Motivated by questions in polynomial function theory, Bhargava \cite{Bha97} introduced the notion of \emph{$P$-orderings} for a subset $X$ of a Dedekind domain $D$. The construction is as follows. Given a prime ideal $P\subset D$, let $a_0$ be an arbitrary element of $X$ and for $k=1,2,\dots$ choose $a_k\in X$  to minimize
\[
\nu_P\left((a_k-a_0)(a_k-a_1)\dots (a_k-a_{k-1})\right),
\]
where $\nu_P$ denotes the $P$-adic valuation on $D$. One of the results of \cite{Bha97} is the surprising fact that, despite the fact that typically the choice of each $a_k$ is non-unique, the sequence of the resulting valuations does not depend on the specific choice of $\{a_i\}$ but only on $X$ and $P$. Later, to study basis of the ring of polynomials with integer-valued divided differences,  Bhargava \cite{Bha09-1} generalised this construction to \emph{$r$-removed $P$-orderings}. For an $r$-removed $P$-ordering one again chooses a sequence $\{a_i\}$ of elements from $X$ but now the first $r+1$ elements $a_0,\dots, a_r$ are chosen arbitrary and then each new element minimizes
\[
\min_{\substack{A\subset \{a_0,\dots, a_{k-1}\} \\ |A|=k-r}} \sum_{a\in A}\nu_P (a_k-a).
\]
Again, one of the results of \cite{Bha09-1} is that the resulting sequences of exponents does not depend on the choice of $\{a_i\}$.

Recently, Grinberg and Petrov \cite{GP21} generalised the notion of $P$-orderings to the context of \emph{ultra triples}, which is a certain extension of ultrametric spaces, obtaining new proofs of several results of \cite{Bha97} and showing that all (prefixes of) $P$-orderings form a strong greedoid. A natural question which has been asked by Bhargava \cite{Bha-private} is then 
\begin{center}
{Do $r$-removed $P$-orderings also fit in the framework of \cite{GP21}?}
\end{center}

We resolve the question in the affirmative by showing that virtually all theorems of \cite{GP21} can be indeed generalised to cover the case of $r$-removed $P$-orderings. This also gives a new proof of \cite[Theorems 3, 4, 30]{Bha09-1}, see Corollary \ref{cor:removed-invariant} and Remark \ref{rem:Dedekind-domains}. 

Let us now introduce the objects we work with. An ultra triple $(E, w, d)$ is given by a ground set $E$, an arbitrary \emph{weight function} $w: E \rightarrow \mathbb{R}$, and a \emph{distance function} $d: \{(e, f ) \in E \times E \ | \ e  \neq f\}\rightarrow \mathbb{R}$ satisfying the ultrametric triangle inequality $d(a, b) \leq \max \{d(a, c), d(b, c)\}$. Note that unlike in the case of an ultrametric space we do not require distances to be non-negative. 

We consider an \emph{$r$-removed distance} from a point $v\in E$ to a finite subset $C\subset E$ defined by
\[
\dist_r(v, C):=
\begin{cases}
    \max\limits_{A\subset C, \, |A|=|C|-r} \sum_{x\in A} d(v, x)
    & |C|>r;\\
     0 & |C|\leq r.
\end{cases}
\]
In words, $\dist_r(v, C)$ is the maximum among sums of distances from $v$ to the points of $C$ except for some $r$ of them. Given this notion of a distance from a point to a set, we then consider \emph{greedy $r$-removed $m$-permutation} of a finite subset $C\subset E$ to be an ordered subset $(c_0,\dots, c_{m-1})$ of $C$ which is defined greedily by choosing elements $c_0, c_1,\dots$ one by one in such a way that for each $n\in \{0, 1, \dots, m-1\}$ the element $c_n$ maximizes
\[
w(c)+\dist_r(c, \{c_0,\dots, c_{n-1}\})
\]
over all possible $c\in C\setminus \{c_0,\dots, c_{n-1}\}$. A related concept is that of an \emph{$r$-removed perimeter} of a finite ordered set $A=(a_1,\dots a_n)$ which is defined to be 
\[
\per_r(A):=\sum_{a\in A} w(a) + \sum_{i=1}^n \dist_r(\{a_1, a_2, \ldots, a_{i-1}\}, a_{i}).
\]

We prove that $r$-removed perimeter does not depend on a permutation of a given set, see Lemma \ref{lemma:perm}, allowing to naturally extend the notion of an $r$-removed perimeter to finite (non-ordered) sets. Then, in Theorem \ref{thm:greedy-gives-max-per} we show that greedy $r$-removed $m$-permutation of a finite set $C$ has maximal $r$-removed perimeter among all subsets of $C$ of size $m$, and as a corollary we show that the sequence of numbers $w(c_j)+\dist_r(\{c_1, c_2, \ldots, c_{j-1}\}, c_j)$ does not depend on the choice of a greedy $r$-removed $m$-permutations. Then in Theorem \ref{thm:greedoid}  we show that all sets of maximal $r$-removed perimeter form a strong greedoid. At the end of the paper we also discuss some other notions of perimeter that fit into our framework. 

The rest of the paper is organised as follows. In Section \ref{sec:definitions} we formally define the objects we study and introduce a device we call a \emph{projection} which is later frequently used. Then, in Section \ref{theorems} we prove that $r$-removed perimeter of a set is well-defined and that construction of an greedy $r$-removed $m$-permutation gives the maximal $r$-removed perimeter among all subsets of size $m$. In Section \ref{sec:greedoid} we prove that subsets of maximal $r$-removed perimeter form a strong greedoid. Finally, Section \ref{sec:perimeter} is devoted to the discussion of a more general concept of a perimeter of a set, covered by our approach.

\section{Basic definitions and constructions} \label{sec:definitions}

We largely follow the notation used in \cite{GP21}, which we now briefly recall. Throughout the paper, we consider a set $E$ as our ground set, and refer to the elements of $E$ as points. For a non-negative integer $m$, an $m$-set means a subset $A$ of $E$ with $|A| = m$, and an $m$-permutation means an ordered set $A=(a_1,\dots, a_m)$ fromed by distinct elements of $E$. Analogously, if $B \subseteq E$ is a subset and $m$ is a non-negative integer, an $m$-subset of $B$ means an $m$-element subset of $B$ and an $m$-permutations of $B$ means an ordered set $A$ formed by $m$ distinct elements of $B$. The following definition already appeared in \cite{GP21}.

\begin{definition}
    An ultra triple is a triple $(E, w, d)$, where $E$ is a set, $w:R\rightarrow\mathbb{R}$ is an arbitrary \emph{weight function}, and $d:\{(e, f ) \in E \times E \ | \ e  \neq f\}\rightarrow \mathbb{R}$\footnote{One could also extend the  domain of $d$ to the whole of $E\times E$ by setting $d(a, a):=-\infty$ for all $a\in E$.} is a \emph{distance function} satisfying  
    \begin{itemize}
    \item $d(a,b) = d(b,a)$ for any two distinct $a,b \in E$.
    \item $
        d(a, b) \leq  \max \{d(a, c),\,  d(b, c)\} \text{ for any three distinct } a, b, c \in E.$
\end{itemize} 
\end{definition}
The inequality above is commonly known as the ultrametric triangle inequality; but unlike the distance function of an ultrametric space, we allow $d$ to take negative values. 
 
The following are formal definitions of the objects already mentioned in the introduction, namely, $r$-removed distance, $r$-removed perimeter, and an $r$-removed $m$-permutation.

\begin{definition}
    Let $(E, w, d)$ be an ultra triple, $C \subseteq E$ be a finite non-empty subset, and $v$ be any point in $E \setminus C$. We define $\dist_r (C, v)$ to be the maximum among all possible sums of distances from $v$ to some $|C|-r$ points of the set $C$. If $|C| \leq r$, we set $\dist_r (C, v) := 0$. 
\end{definition}
\begin{definition}
Let $(E, w, d)$ be an ultra triple. For a permutation $(a_1, a_2, \ldots, a_k)$ of a finite subset $A \subseteq E$, we define its $r$-removed perimeter by 
    $$ \per_r((a_1, a_2, \ldots, a_k)) := \sum_{a\in A} w(a) + \sum_{i=1}^k \dist_r(\{a_1, a_2, \ldots, a_{i-1}\}, a_{i}). $$
\end{definition}
\begin{definition} 
For an ultra triple $(E, w, d)$ let $C \subseteq E$ be a finite set, and $m$ be a non-negative integer.
A greedy $r$-removed $m$-permutation of $C$ is a list $(c_1, c_2, \ldots, c_m)$ of $m$ distinct elements of $C$ such that for each $i \in \{1, \ldots, m\}$ and each $x \in C \setminus \{c_1, c_2, \ldots, c_{i-1}\}$, we have 
\begin{equation}
\per_r((c_1,c_2,\ldots,c_i)) \geqslant \per_r((c_1,c_2,\ldots,c_{i-1},x)).
\end{equation}
\end{definition}

We now define a useful construction that we are going to use in the proofs, it earlier implicitly appeared in \cite{GP21}. We first recall the following definition from \cite{GP21}.

\begin{definition}  
Let $(E, w, d)$ be an ultra triple,
$A \subseteq E$ be a finite non-empty subset and $c \in E$ be any point. 
We define a subset $\proj_A (c)$ of $A$ as follows:
\begin{itemize}
    \item If $c \in A$, then $\proj_A (c):=\{c\}$;
    \item If $c \notin A$, then $\proj_A (c)$ is the set of all $a \in A$ that minimize the distance $d(c, a)$.
\end{itemize}
\end{definition}

Now we extend it in the following way 
\begin{definition}
Let $(E, w, d)$ be an ultra triple, $C=\{c_1,c_2,\ldots,c_k\}\subseteq E$ be a finite ordered set and $A$ be any $n$-subset of $E$, where $n \geq k$. We define $k$-permutation $(v_1,v_2,\ldots,v_k)$ of $A$ recursively as follows: $v_i$ is defined to be a projection of $c_i$ onto $A \setminus \{v_1,v_2,\ldots,v_{i-1}\}$  for each $i = 1,2,\ldots,k$. We denote $(v_1,v_2,\ldots,v_k)$ by $\proj(C\rightarrow A)$. These projections $v_i$ may be non-unique, in which case we take arbitrary element of the projection set.
\end{definition}
There are three important observations about these constructions that we now make. The first proposition already appeared in \cite[Lemma 2.13]{GP21} and we give its short proof for completeness. 
\begin{prop}
\label{prop:proj}
Let $(E, w, d)$ be an ultra triple and $A \subseteq E$ be a non-empty finite set. Then for a point $c \in E$, its projection $b\in \proj_A(c)$ and any $x\in A\setminus \{b\}$ we have $d(b,x) \leq d(c,x)$.
\end{prop}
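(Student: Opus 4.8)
The plan is to split into the two cases of the definition of $\proj_A(c)$. If $c \in A$, then by definition $\proj_A(c) = \{c\}$, so the only possible choice is $b = c$, and the claimed inequality $d(b,x) \le d(c,x)$ holds trivially as an equality for every $x \in A \setminus \{b\}$. This case requires no work.

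The substantive case is $c \notin A$. Here $b$ is, by definition, an element of $A$ minimizing $d(c,a)$ over all $a \in A$. Fix any $x \in A \setminus \{b\}$. The three points $c, b, x$ are pairwise distinct: $b \ne x$ by assumption, and $c$ differs from both $b$ and $x$ since $c \notin A$ while $b, x \in A$. Hence the ultrametric triangle inequality applies to the triple $(b, x, c)$, giving
\[
d(b,x) \le \max\{d(b,c),\, d(x,c)\}.
\]
By symmetry of $d$ this is $\max\{d(c,b),\, d(c,x)\}$, and by the minimality of $b$ we have $d(c,b) \le d(c,x)$, so the maximum equals $d(c,x)$. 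This yields $d(b,x) \le d(c,x)$, as desired.

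I do not anticipate any real obstacle here: the statement is essentially a one-line consequence of the ultrametric inequality combined with the extremal defining property of the projection, and the only point that needs a moment's care is checking that $c$, $b$, $x$ are genuinely three distinct points so that the (three-point) ultrametric inequality is legitimately invoked — which follows immediately from $c \notin A$ in the nontrivial case.
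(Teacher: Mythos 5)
Your proof is correct and follows essentially the same route as the paper: trivial equality when $c\in A$, and otherwise the ultrametric inequality for $c,b,x$ combined with the minimality $d(c,b)\le d(c,x)$ defining the projection. Your extra check that $c,b,x$ are pairwise distinct is a small but welcome point of care the paper leaves implicit.
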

\begin{proof}
    If $c \in A$ then $b=c$ and we trivially have an equality. Otherwise, since $x\in A$, by the definition of the projection 
    we have $d(c, x) \geq d(c, b)$ and so by the ultrametric triangle inequality we have 
    \[
    d(b, x)\leq \max{\{d(c, b), d(c, x)\}} = d(c, x).
    \]
\end{proof}

\begin{prop}
\label{prop:set-projection-1}
     Let $(E, w, d)$ be an ultra triple, $C=(c_1,c_2,\ldots,c_k)\subseteq E$ be a finite ordered set and $A$ be a $n$-subset of $E$, with $n \geq k$. Denote $\proj(C\rightarrow A)$ by $(v_1,v_2,\ldots,v_k) $. Then for each $j \in \{1,2,\ldots,k\}$ one has
\begin{equation*}
    (A\setminus \{v_1, \ldots, v_j\})\cap \{c_1,c_2,\ldots,c_j\} = \varnothing.
\end{equation*}
\end{prop}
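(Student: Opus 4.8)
The statement unwinds into a purely set-theoretic bookkeeping claim about the recursive definition of $\proj(C\to A)$, so the plan is to isolate the single key observation and then deduce the proposition by a nesting argument. The key observation is the following local fact: for each index $i\in\{1,\dots,k\}$ one has $c_i\notin A\setminus\{v_1,\dots,v_i\}$. Indeed, by definition $v_i$ is a projection of $c_i$ onto $B_i:=A\setminus\{v_1,\dots,v_{i-1}\}$. If $c_i\in B_i$, then by the first clause of the definition of $\proj_{B_i}$ we get $\proj_{B_i}(c_i)=\{c_i\}$, hence $v_i=c_i$, and therefore $c_i=v_i\notin A\setminus\{v_1,\dots,v_i\}$. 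If instead $c_i\notin B_i$, then either $c_i\notin A$, in which case trivially $c_i\notin A\setminus\{v_1,\dots,v_i\}\subseteq A$, or $c_i\in\{v_1,\dots,v_{i-1}\}\subseteq\{v_1,\dots,v_i\}$, in which case again $c_i\notin A\setminus\{v_1,\dots,v_i\}$. This exhausts all cases.

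Granted this observation, fix $j\in\{1,\dots,k\}$ and take any $i\le j$. Since removing more elements only shrinks the set, we have the inclusion $A\setminus\{v_1,\dots,v_j\}\subseteq A\setminus\{v_1,\dots,v_i\}$, and the observation gives $c_i\notin A\setminus\{v_1,\dots,v_i\}$, so $c_i\notin A\setminus\{v_1,\dots,v_j\}$. As $i\le j$ was arbitrary, no element of $\{c_1,\dots,c_j\}$ lies in $A\setminus\{v_1,\dots,v_j\}$, which is exactly the assertion $(A\setminus\{v_1,\dots,v_j\})\cap\{c_1,\dots,c_j\}=\varnothing$. (The case $j=0$, if one wishes to include it, is vacuous.)

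I do not expect any genuine obstacle here; the only point requiring a little care is keeping straight which set each $v_i$ is projected onto — namely $A$ with the previously chosen projections $v_1,\dots,v_{i-1}$ removed — and handling the two clauses ($c_i\in B_i$ versus $c_i\notin B_i$) of the definition of $\proj$ separately. One should also be mindful that the $v_i$ are only defined up to the ambiguity in the projection sets, but the argument above uses nothing about $v_i$ beyond $v_i\in\proj_{B_i}(c_i)\subseteq B_i$, so it is insensitive to that choice.
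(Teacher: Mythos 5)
Your proof is correct and takes essentially the same approach as the paper's: the key step in both is that if $c_i$ still lies in $B_i=A\setminus\{v_1,\dots,v_{i-1}\}$ then necessarily $v_i=c_i$, combined with the nesting $A\setminus\{v_1,\dots,v_j\}\subseteq A\setminus\{v_1,\dots,v_{i}\}$ for $i\le j$. The paper phrases this as a short proof by contradiction, whereas you argue directly with an explicit case split on whether $c_i\in B_i$, but the substance is identical.
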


\begin{proof}
Arguing by contradiction we assume for some $i\leq j\leq k$ that $c_i \in A\setminus \{v_1, \ldots, v_j\}$. This implies that $c_i\in A\setminus \{v_1,v_2,\ldots,v_{i-1}\}$. By definition this means that $v_i:=\proj_{A\setminus \{v_1,v_2,\ldots,v_{i-1}\}}(c_i) = c_i$. Hence, $v_i=c_i\in A\setminus \{v_1, \ldots, v_j\}$ which is impossible.
\end{proof}

\begin{prop}
\label{prop:proj_set}
Let $(E, w, d)$ be an ultra triple, $C=(c_1,c_2,\ldots,c_k)\subseteq E$ be a finite ordered set and $A$ be a $n$-subset of $E$, with $n > k$. Then for each $v\in A\setminus \proj(C\rightarrow A)$ 
\begin{equation*}
\dist_r(\proj(C\rightarrow A), v) \leq \dist_r(C, v).
\end{equation*}
\end{prop}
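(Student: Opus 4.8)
The plan is to fix a point $v\in A\setminus \proj(C\to A)$, write $V:=\proj(C\to A)=(v_1,\dots,v_k)$, take the $(k-r)$-subset of $V$ that witnesses $\dist_r(V,v)$, and compare it with the subset of $C$ formed by the $c_i$'s carrying the same indices. First I would dispose of the trivial case $k\le r$: then $|V|=k\le r$ as well, so both $\dist_r(V,v)$ and $\dist_r(C,v)$ are $0$ by definition and there is nothing to prove. So from now on assume $k>r$.

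The key step is the index-wise inequality: for every $i\in\{1,\dots,k\}$ and every $v\in A\setminus V$ one has $d(v_i,v)\le d(c_i,v)$. Indeed, $v\in A$ and $v\notin\{v_1,\dots,v_{i-1}\}$, so $v$ lies in the set $A\setminus\{v_1,\dots,v_{i-1}\}$ onto which $c_i$ is projected to produce $v_i$, and $v\ne v_i$ since $v\notin V$. This set is non-empty (it has $n-i+1\ge n-k+1\ge 1$ elements, which is where $n\ge k$ is used), so Proposition \ref{prop:proj}, applied with $A\setminus\{v_1,\dots,v_{i-1}\}$ in place of $A$, the point $c_i$ in place of $c$, its projection $v_i$ in place of $b$, and $x=v$, yields $d(v_i,v)\le d(c_i,v)$. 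I should also record that $d(c_i,v)$ is even well-defined, i.e. $c_i\ne v$: this is precisely Proposition \ref{prop:set-projection-1} with $j=k$, which gives $(A\setminus V)\cap\{c_1,\dots,c_k\}=\varnothing$, so $v\in A\setminus V$ forces $v\notin\{c_1,\dots,c_k\}$.

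Finally I would assemble these pieces. Since $|V|=k>r$, by the definition of $\dist_r(V,v)$ there is a subset $S\subseteq\{1,\dots,k\}$ with $|S|=k-r$ and $\dist_r(V,v)=\sum_{i\in S}d(v,v_i)$. The index-wise inequality gives $\sum_{i\in S}d(v,v_i)\le\sum_{i\in S}d(v,c_i)$. As $c_1,\dots,c_k$ are distinct, $\{c_i:i\in S\}$ is a $(k-r)$-subset of $C$, so $\sum_{i\in S}d(v,c_i)\le\dist_r(C,v)$ by the definition of $\dist_r(C,v)$. Chaining the two inequalities gives $\dist_r(\proj(C\to A),v)\le\dist_r(C,v)$, as desired.

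There is no real obstacle here: the argument is short and its entire mathematical content sits in Propositions \ref{prop:proj} and \ref{prop:set-projection-1}. The only thing requiring a little care is the bookkeeping around the domain of $d$ (the distinctness checks $v\ne v_i$ and $v\ne c_i$) and the non-emptiness of the intermediate sets $A\setminus\{v_1,\dots,v_{i-1}\}$; both are handled by the hypothesis on the size of $A$ together with Proposition \ref{prop:set-projection-1}.
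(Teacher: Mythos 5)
Your proposal is correct and follows the same route as the paper: the whole argument rests on the pointwise inequality $d(v_i,v)\le d(c_i,v)$ obtained from Proposition \ref{prop:proj} applied to $A\setminus\{v_1,\dots,v_{i-1}\}$, and then summing over a witnessing $(k-r)$-subset. Your extra bookkeeping (the case $k\le r$, the distinctness checks via Proposition \ref{prop:set-projection-1}) only makes explicit what the paper leaves implicit.
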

\begin{proof}
Denote $\proj(C\rightarrow A)$ by $(v_1,v_2,\ldots,v_k)$. The statement of the proposition would follow from the inequality $d(v_i, v)\leq d(c_i, v)$ for each $i\in\{1, 2, \dots, k\}$. But since 
$v \in A\setminus\{v_1,v_2,\ldots,v_{i-1}\}$ this inequality is given by Proposition \ref{prop:proj} applied to the set $A \setminus \{v_1,v_2,\ldots,v_{i-1}\}$ and points $c_i$, $v_i=\proj_{A \setminus \{v_1,v_2,\ldots,v_{i-1}\}}(c_i)$ and $v$.
\end{proof}

\section{Perimeter and greedy $r$-removed $m$-permutations} 
\label{theorems}

We first prove that any two permutations of a given set have the same $r$-removed perimeter.
\begin{lemma} \label{lemma:perm}
Any two permutations of a finite set $A\subseteq E$ have the same $r$-removed perimeter.
\end{lemma}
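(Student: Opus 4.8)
The natural approach is to show that swapping two adjacent elements in a permutation does not change the $r$-removed perimeter; since adjacent transpositions generate the symmetric group, this suffices. So suppose $A = (a_1, \ldots, a_k)$ and let $A'$ be obtained by swapping $a_i$ and $a_{i+1}$. Writing out the definition of $\per_r$, all terms $\dist_r(\{a_1, \ldots, a_{j-1}\}, a_j)$ for $j \notin \{i, i+1\}$ are literally unchanged (the prefix set $\{a_1, \ldots, a_{j-1}\}$ is the same, and the element $a_j$ is the same), and the sum $\sum_{a \in A} w(a)$ is manifestly permutation-independent. Hence the problem reduces to the identity
\[
\dist_r(S, a_i) + \dist_r(S \cup \{a_i\}, a_{i+1}) = \dist_r(S, a_{i+1}) + \dist_r(S \cup \{a_{i+1}\}, a_i),
\]
where $S := \{a_1, \ldots, a_{i-1}\}$. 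In fact it is cleanest to prove the more symmetric claim: for any finite set $S \subseteq E$ and any two distinct points $x, y \in E \setminus S$, the quantity $\dist_r(S, x) + \dist_r(S \cup \{x\}, y)$ is symmetric in $x$ and $y$, i.e.\ equals $w$-free "perimeter of $S \cup \{x,y\}$ built on top of $S$."

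To prove this symmetric identity, I expect to unfold $\dist_r$ into its max-over-subsets form and reorganize. One clean route: show that $\dist_r(S, x) + \dist_r(S \cup \{x\}, y)$ equals the maximum, over all ways of choosing which $r$ points to delete, of an appropriate sum of pairwise distances among $S \cup \{x, y\}$ — specifically, a maximum of $\sum_{a \in A'} d(x,a) + \sum_{a \in A''} d(y,a)$ over subsets $A' \subseteq S$, $A'' \subseteq S \cup \{x\}$ with prescribed sizes, and then argue that the optimal choice never "wastes" the deletion of $x$ from the second sum in an asymmetric way, using the ultrametric inequality together with the projection lemma (Proposition \ref{prop:proj}) to compare $d(x, a)$ and $d(y, a)$ for $a \in S$. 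The role of the ultrametric condition is exactly that if one of $d(x,y), d(x,a), d(y,a)$ is strictly the largest it is "tied" with a second one, so deleting $x$ versus deleting $y$ from the configuration can be exchanged without loss.

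The main obstacle will be handling the case analysis in the $r$-removed (as opposed to $r=0$) setting: when $|S| + 1 > r$ but $|S| \le r$, or near that boundary, the two sides of the adjacent-swap identity involve $\dist_r$ terms that are sometimes $0$ by fiat and sometimes genuine maxima, so the bookkeeping of which $r$ points get removed has to be done carefully. I would organize this by first disposing of the degenerate small cases ($|A| \le r+1$, where essentially everything collapses), and then, in the main case, carrying out the exchange argument for a single adjacent transposition with $S$ of size $\ge r-1$. An alternative that may be technically smoother — and which I would fall back on if the direct computation gets unwieldy — is to set up a symmetric closed-form expression for $\per_r(A)$ as a maximum over certain combinatorial data attached to the \emph{unordered} set $A$ (a max over "deletion patterns" compatible with an ordering), prove that the greedy/ordered definition always achieves this maximum regardless of order via a single exchange step, and thereby get permutation-independence for free; the ultrametric inequality enters this version when one shows a locally optimal deletion pattern can be transported across a transposition.
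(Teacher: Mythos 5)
Your reduction is exactly the one the paper uses: pass to adjacent transpositions, cancel the common terms, and reduce the lemma to the single identity
\[
\dist_r(S, x) + \dist_r(S \cup \{x\}, y) \;=\; \dist_r(S, y) + \dist_r(S \cup \{y\}, x),
\qquad S=\{a_1,\dots,a_{i-1}\},\ x=a_i,\ y=a_{i+1}.
\]
Up to this point the proposal is fine. But the proof of this identity --- which is the entire content of the lemma --- is never actually carried out: you describe two possible strategies (``unfold the maxima and argue the optimum never wastes the deletion of $x$ asymmetrically'', or a symmetric closed form over ``deletion patterns'') and explicitly defer the required case analysis, presenting the second route only as a fallback ``if the direct computation gets unwieldy''. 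Neither route is executed, so what you have is a plan rather than a proof. The appeal to Proposition \ref{prop:proj} is also off target: projections play no role in this lemma; the only geometric input is the ultrametric inequality applied to the triangles formed by $x$, $y$ and a point $a\in S$.

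The missing step is a short counting dichotomy. Put $s=|S|$ and $z=d(x,y)$. If $s+1\le r$, all four terms vanish. Otherwise call $a\in S$ red if $d(x,a)>z$ and $d(y,a)>z$; by the ultrametric inequality a red $a$ satisfies $d(x,a)=d(y,a)$, while a non-red $a$ satisfies $\max\{d(x,a),d(y,a)\}=z$. If there are at least $s+1-r$ red points, then the $s-r$ largest distances from $x$ to $S$ coincide (as a multiset) with those from $y$ to $S$, and likewise the $s+1-r$ largest distances from $y$ to $S\cup\{x\}$ coincide with those from $x$ to $S\cup\{y\}$, so the two sides agree term by term. If there are fewer than $s+1-r$ red points, then $z$ is necessarily among the retained distances for both enlarged sets, giving $\dist_r(S\cup\{x\},y)=\dist_r(S,y)+z$ and $\dist_r(S\cup\{y\},x)=\dist_r(S,x)+z$, and the identity follows by substitution. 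This threshold argument (number of red points versus $s+1-r$) is precisely what your sketch gestures at but does not supply; note also that, organized this way, the boundary cases near $|S|\approx r$ that you flag as the main obstacle need no separate treatment beyond the split $s+1\le r$ versus $s+1>r$.
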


\begin{proof}
It suffices to prove the statement for pairs of permutations which differ by one transposition. The general case is reduced to it by consecutive transpositions.

 Let us prove the statement for permutations $(a_1, \ldots , a_t, a_{t+1}, \ldots a_k)$ and $(a_1, \ldots , a_{t-1}, a_{t+1}, a_t, \ldots , a_k)$. Denote by $C$ the set $\{a_1, a_2, \ldots a_{t-1}\}$. Many summands from the definition of $r$-removed perimeter coincide, all that remains to prove is
\begin{equation}
\label{eq:dist-permutation-condition}
    \dist_r(C, a_t) + \dist_r(C\cup \{a_t\} , a_{t+1}) = \dist_r(C, a_{t+1}) + \dist_r(C\cup \{a_{t+1}\}, a_t).
\end{equation}

If $t \leq r$, both sides are 0. Otherwise, we let $z = d(a_{t}, a_{t+1})$, $x_j = d(a_{t}, a_j)$ and $y_j = d(a_{t+1}, a_j)$, where $j=1, \ldots, t - 1$. In what follows we only consider triangles of the form $a_{t}a_{t+1}a_j$ for some $j=1,2,\ldots, t-1$ and use ultrametric triangle inequality for them. 

We colour triangles with two sides strictly greater than $z$ in red, in which case $x_j = y_j$ by the ultrametric inequality. Triangles coloured in red correspond to some largest distances from points $a_t$ and $a_{t+1}$ to the set $C$ which coincide. In any other triangle we must have $x_i = z \geqslant y_i$ or $y_i = z\geqslant x_i$.

If there are at least $t - r$ red triangles, then $\dist_r(C, a_t) = \dist_r(C, a_{t+1})$, ${\dist_r(C\cup \{a_t\} , a_{t+1}) = \dist_r(C\cup \{a_{t+1}\}, a_t)}$ and \eqref{eq:dist-permutation-condition} is true. 

If there are less than $t-r$ red triangles, then $\dist_r(C\cup \{a_t\} , a_{t+1}) = \dist_r(C , a_{t+1}) + z$ and $\dist_r(C\cup \{a_{t+1}\}, a_t) = \dist_r(C, a_t) + z$. By substituting these expressions into \eqref{eq:dist-permutation-condition}, we again get an equality.
\end{proof}

In  light of this lemma we have the following definition

\begin{definition}
For a finite subset $A \subseteq E$, we define its $r$-removed perimeter $\per_r (A)$ to be the common $r$-removed perimeter of all permutations of $A$.
\end{definition}

\begin{remark}
For the case $r=0$, the $r$-removed perimeter is the sum of the distances between all unordered pairs of points plus the sum of the weight function of all points. This case was considered in \cite{GP21}.
\end{remark}

\begin{theorem} 
\label{thm:greedy-gives-max-per}
Let $(E, w, d)$ be an ultra triple and $C \subseteq E$ be a finite subset, and  $m$ and $r$ be non-negative integers. Let $(c_1, c_2, \ldots, c_m)$ be any greedy $r$-removed $m$-permutation of $C$. Then, for each $k \in \{0,1,...,m\}$, the set $\{c_1,c_2,\ldots,c_k\}$ has maximum $r$-removed perimeter among all $k$-subsets of $C$.
\end{theorem}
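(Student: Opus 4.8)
The plan is to argue by induction on $k$, the base case $k=0$ being immediate since $\per_r(\varnothing)=0$ is vacuously maximal among $0$-subsets of $C$. For the inductive step I assume that $\{c_1,\dots,c_{k-1}\}$ has maximal $r$-removed perimeter among all $(k-1)$-subsets of $C$, fix an arbitrary $k$-subset $B\subseteq C$, and aim to prove $\per_r(B)\le\per_r(\{c_1,\dots,c_k\})$.

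The core idea is to compare $B$ with the greedy prefix through the projection construction of Section \ref{sec:definitions}. Since $|B|=k>k-1$, I form $(v_1,\dots,v_{k-1}):=\proj\big((c_1,\dots,c_{k-1})\to B\big)$ and let $v$ denote the unique remaining element of $B\setminus\{v_1,\dots,v_{k-1}\}$. Ordering $B$ as $(v_1,\dots,v_{k-1},v)$ and using that $\per_r$ is permutation-independent (Lemma \ref{lemma:perm}) gives
$$\per_r(B)=\per_r(\{v_1,\dots,v_{k-1}\})+w(v)+\dist_r(\{v_1,\dots,v_{k-1}\},v).$$
Now three facts combine. First, Proposition \ref{prop:proj_set} (applied with $A=B$) yields $\dist_r(\{v_1,\dots,v_{k-1}\},v)\le\dist_r(\{c_1,\dots,c_{k-1}\},v)$. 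Second, Proposition \ref{prop:set-projection-1} shows $v\notin\{c_1,\dots,c_{k-1}\}$, hence $v\in C\setminus\{c_1,\dots,c_{k-1}\}$ is a legitimate candidate for the $k$-th greedy choice, so unpacking the definition of a greedy $r$-removed $m$-permutation gives $w(v)+\dist_r(\{c_1,\dots,c_{k-1}\},v)\le w(c_k)+\dist_r(\{c_1,\dots,c_{k-1}\},c_k)$. Third, $\{v_1,\dots,v_{k-1}\}$ is a $(k-1)$-subset of $C$, so the inductive hypothesis gives $\per_r(\{v_1,\dots,v_{k-1}\})\le\per_r(\{c_1,\dots,c_{k-1}\})$. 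Chaining these three inequalities and using $\per_r(\{c_1,\dots,c_k\})=\per_r(\{c_1,\dots,c_{k-1}\})+w(c_k)+\dist_r(\{c_1,\dots,c_{k-1}\},c_k)$ (the definition of $\per_r$ on the permutation $(c_1,\dots,c_k)$) closes the induction.

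I do not expect a genuine obstacle: all the analytic substance already lives in Propositions \ref{prop:proj}--\ref{prop:proj_set} and Lemma \ref{lemma:perm}. The points requiring care are organizational — checking the size inequality $|B|>k-1$ so that the projection and Proposition \ref{prop:proj_set} are applicable, confirming via Proposition \ref{prop:set-projection-1} that the leftover point $v$ is an admissible greedy candidate at stage $k$, and translating the $\per_r$-form of the greedy condition into the pointwise inequality for $w+\dist_r$ on which the exchange argument rests.
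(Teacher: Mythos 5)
Your proof is correct and follows essentially the same route as the paper: induction on $k$, projecting the greedy $(k-1)$-prefix onto the competitor set, and combining Propositions \ref{prop:set-projection-1} and \ref{prop:proj_set} with the greedy inequality and the induction hypothesis. The only cosmetic difference is that the paper projects the full $k$-prefix $(c_1,\dots,c_k)$ onto $A$ and uses its last projection point $v_k$, which coincides with your leftover point $v$, so the arguments are the same.
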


\begin{proof}

Given a greedy $r$-removed $m$-permutation $(c_1,c_2,\ldots,c_m)$, we want to prove that for any $k$-subset $A\subseteq C$, $\per_r(A) \leq \per_r (\{c_1, c_2, \ldots , c_k\})$. We induct on $k$. 

For $k=0$ both perimeters are 0, and so the inequality is trivially true. For the induction step from $k - 1$ to $k$,
let $(v_1,v_2,\ldots,v_k):=\proj(\{c_1,c_2,\ldots,c_k\}\rightarrow A)$, which is an ordering of $A$. Then by Proposition \ref{prop:set-projection-1},
\begin{equation*}
    v_{k} \notin \{c_1,c_2,\ldots,c_{k-1}\}.
\end{equation*}

By induction hypothesis we know that
\[
\per_r(\{v_{1}, v_{2}, \ldots, v_{k-1}\}) \leq \per_r(\{c_{1}, c_{2}, \ldots, c_{k-1}\}),
\]
and so to complete the induction step it suffices to show that 
\[
\per_r(A) - \per_r(\{v_{1}, v_{2}, \ldots, v_{k-1}\}) \leq \per_r (\{c_1, c_2, \ldots , c_k\}) - \per_r(\{c_{1}, c_{2}, \ldots, c_{k-1}\}),
\] 
which, implicitly using Lemma \ref{lemma:perm}, can be equivalently written as 
\begin{equation}
\label{ineq:greedy-gives-max-per-thm-tocheck}
    w\left(v_{k}\right)+\dist_r(\{v_1, v_2, \ldots, v_{k-1}\}, v_k) \leq w\left(c_{k}\right)+\dist_r(\{c_1, c_2, \ldots, c_{k-1}\}, c_k).
\end{equation}

We now turn to proving \eqref{ineq:greedy-gives-max-per-thm-tocheck}. Since $v_k \in A \setminus \{c_1,c_2,\ldots,c_{k-1}\} \subseteq C \setminus \{c_1,c_2,\ldots ,c_{k-1}\}$ (recall that $A \subseteq C$), we have $\per_r \{c_1,c_2,\ldots,c_{k-1},v_k\} \leq \per_r\{c_1,c_2,\ldots,c_k\}$ by the definition of a greedy $r$-removed $m$-permutation. Subtracting $\per_r(\left\{c_{1}, c_{2}, \ldots, c_{k-1}\right\})$ from both sides we arrive at 
\begin{equation*}
    w\left(v_{k}\right)+\dist_r(\{c_1, c_2, \ldots ,c_{k-1}\}, v_k) \leq w\left(c_{k}\right)+\dist_r(\{c_1, c_2, \ldots, c_{k-1}\}, c_k)
\end{equation*}

And so to deduce \eqref{ineq:greedy-gives-max-per-thm-tocheck} it remains to show that 
\begin{equation*}
      \dist_r(\{v_1, v_2, \ldots ,v_{k-1}\}, v_k) \leq \dist_r(\{c_1, c_2, \ldots ,c_{k-1}\}, v_k).
\end{equation*}
Which is nothing else but the statement of Proposition \ref{prop:proj_set} for the point $v=v_k$ and  $\{v_1, v_2, \ldots ,v_{k-1}\} = \proj (\{c_1, c_2, \ldots ,c_{k-1}\}\rightarrow A)$.
\end{proof}

\begin{remark}\label{rem:perimeter-equality}
It follows from the proof that if the equality 
\[
\per_r(\{v_1,v_2,\ldots,v_k\}) = \per_r(\{c_{1}, c_{2}, \ldots, c_{k}\})
\] 
holds for $(v_1,v_2,\ldots,v_k):=\proj(\{c_1,c_2,\ldots,c_k\}\rightarrow A)$, then for each $j<k$ one also has an equality $\per_r(\{v_{1}, v_{2}, \ldots, v_j\}) = \per_r(\{c_{1}, c_{2}, \ldots, c_j\})$.
\end{remark}

\begin{corollary}
\label{cor:removed-invariant}
 Let $C\subseteq E$ be a set, $m$ and $r$ be a non-negative integers, $j\in \{ 1, 2, \ldots m\}$. If $(c_1, c_2, \ldots, c_m)$ is a greedy $r$-removed $m$-permutation of $C$, then the number
\begin{align*}
 w\left(c_{j}\right)+\dist_r(\{c_1, c_2, \ldots, c_{j-1}\}, c_j) 
\end{align*}
does not depend on the choice of this greedy $r$-removed $m$-permutation but only depends on $C, r$ and $j$.
\end{corollary}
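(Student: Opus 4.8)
The plan is to realize the quantity $w(c_j)+\dist_r(\{c_1,\dots,c_{j-1}\},c_j)$ as a difference of two $r$-removed perimeters of prefixes of the greedy permutation, and then use Theorem~\ref{thm:greedy-gives-max-per} to replace each of these perimeters by a number manifestly depending only on $C$, $r$, and the length of the prefix.

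First I would unwind the definition of $\per_r$. Since $(c_1,\dots,c_j)$ and $(c_1,\dots,c_{j-1})$ share the common prefix $(c_1,\dots,c_{j-1})$, all the weight terms and all the distance terms of $\per_r$ cancel except for the contribution of the last point, giving
\[
\per_r(\{c_1,\dots,c_j\}) - \per_r(\{c_1,\dots,c_{j-1}\}) = w(c_j) + \dist_r(\{c_1,\dots,c_{j-1}\}, c_j)
\]
for every $j\in\{1,\dots,m\}$; here Lemma~\ref{lemma:perm} is what lets us speak of $\per_r$ of the unordered sets $\{c_1,\dots,c_j\}$ and $\{c_1,\dots,c_{j-1}\}$ in the first place.

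Next I would introduce $M(k)$, defined as the maximum of $\per_r(A)$ over all $k$-subsets $A\subseteq C$. By construction $M(k)$ depends only on $C$, $r$, and $k$. Theorem~\ref{thm:greedy-gives-max-per} asserts precisely that $\per_r(\{c_1,\dots,c_k\}) = M(k)$ for each $k\in\{0,1,\dots,m\}$, and this holds for \emph{any} greedy $r$-removed $m$-permutation of $C$. Substituting into the telescoping identity above yields
\[
w(c_j) + \dist_r(\{c_1,\dots,c_{j-1}\}, c_j) = M(j) - M(j-1),
\]
whose right-hand side depends only on $C$, $r$, and $j$, which is exactly the claim.

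There is no real obstacle here: the corollary is a bookkeeping consequence of Theorem~\ref{thm:greedy-gives-max-per}. The only point deserving a word of care is the telescoping of the definition of $\per_r$, which is legitimate only because $\per_r$ is permutation-independent (Lemma~\ref{lemma:perm}); without that, $\per_r(\{c_1,\dots,c_j\})$ would not be well-defined as a function of the set.
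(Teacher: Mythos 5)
Your proposal is correct and coincides with the paper's own argument: both identify $w(c_j)+\dist_r(\{c_1,\dots,c_{j-1}\},c_j)$ with the difference $\per_r(\{c_1,\dots,c_j\})-\per_r(\{c_1,\dots,c_{j-1}\})$ and invoke Theorem~\ref{thm:greedy-gives-max-per} to see that each prefix perimeter equals the maximum over all subsets of that size, hence depends only on $C$, $r$, and the prefix length. Your explicit remark that Lemma~\ref{lemma:perm} underlies the well-definedness of these set perimeters is a fair elaboration of what the paper leaves implicit, but the route is the same.
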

\begin{proof}
 By Theorem \ref{thm:greedy-gives-max-per}, for each $k\leq m$ the set $\{c_1, c_2, \ldots, c_k\}$ has maximum perimeter among all $k$-subsets of $C$, which implies that $\per_r(\{c_1, c_2, \ldots, c_k\})$
 does not depend on the choice of the greedy $r$-removed $m$-permutation of $C$. It remains to note that 
\begin{align*}
     w(c_j) + \dist_r(\{c_1, c_2, \ldots, c_{j-1}\}, c_j)
     =
     \per_r(\{c_1, c_2, \ldots, c_j\}) - \per_r(\{c_1, c_2, \ldots, c_{j-1}\}).
\end{align*}
\end{proof}

\begin{remark}\label{rem:Dedekind-domains}
    As a special case of this corollary we obtain the results of \cite[Theorems 3, 4, 30]{Bha09-1}. Indeed, for a Dedekind domain $D$, a prime ideal $P\subset D$, and $h\in \mathbb{Z}_{\geq 0}$, the distance function $d_{P, h}(a, b):=-\max(h, \nu_P(a-b))$ satisfies the ultrametric triangle inequality and so the result follows from  Corollary \ref{cor:removed-invariant} applied to an ultra triple $(S, w\equiv 0, d_{P, h})$.
\end{remark}

We now prove the converse of Theorem \ref{thm:greedy-gives-max-per}, namely, that any set of maximal $r$-removed perimeter is a prefix of some greedy $r$-removed $m$-permutation.

\begin{theorem}
\label{thm:any-set-with-max-per-is-greedy}
Let $(E, w, d)$ be an ultra triple, $C \subseteq E$ be a finite set, and  $m$ be a non-negative integer such that $|C| \geqslant m$. For $k \in \{0,1,\ldots,m\}$ let  $A$ be a $k$-subset of $C$ having maximum $r$-removed perimeter (among all $k$-subsets of $C$). Then, there exists a greedy $r$-removed $m$-permutation of $C$ for which $A$ is a prefix of this permutation.
\end{theorem}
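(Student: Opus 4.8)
The plan is to prove Theorem~\ref{thm:any-set-with-max-per-is-greedy} by induction on $k$, building the greedy permutation one element at a time. For $k=0$ there is nothing to choose for the prefix, and we simply need to know that a greedy $r$-removed $m$-permutation of $C$ exists at all, which is clear since at each step there is at least one maximizer of $w(c)+\dist_r(\{c_1,\dots,c_{i-1}\},c)$ over the nonempty set $C\setminus\{c_1,\dots,c_{i-1}\}$. For the inductive step, suppose $A=\{a_1,\dots,a_k\}$ is a $k$-subset of $C$ of maximum $r$-removed perimeter. The first sub-goal is to show that some $(k-1)$-subset $A'$ of $A$ also has maximum $r$-removed perimeter among all $(k-1)$-subsets of $C$; then by the induction hypothesis $A'$ is a prefix of some greedy $r$-removed $m$-permutation, and we will argue we can extend it through $A$.

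For the first sub-goal I would take a greedy $r$-removed $m$-permutation $(c_1,\dots,c_m)$ of $C$ (exists by the $k=0$ case) and set $(v_1,\dots,v_k):=\proj(\{c_1,\dots,c_k\}\rightarrow A)$, which by the construction of $\proj$ is an ordering of $A$. By Theorem~\ref{thm:greedy-gives-max-per}, $\per_r(\{c_1,\dots,c_k\})$ is maximal among $k$-subsets of $C$, hence equals $\per_r(A)$; the chain of inequalities in the proof of Theorem~\ref{thm:greedy-gives-max-per} then forces $\per_r(\{v_1,\dots,v_{k-1}\})=\per_r(\{c_1,\dots,c_{k-1}\})$ and, by Remark~\ref{rem:perimeter-equality}, equality all the way down: $\per_r(\{v_1,\dots,v_j\})=\per_r(\{c_1,\dots,c_j\})$ for all $j\le k$. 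In particular $A'':=\{v_1,\dots,v_{k-1}\}$, which is the $(k-1)$-subset of $A$ obtained by deleting $v_k$, has maximum $r$-removed perimeter among all $(k-1)$-subsets of $C$. Moreover, tracing equality in inequality~\eqref{ineq:greedy-gives-max-per-thm-tocheck} and the subsequent steps shows $w(v_k)+\dist_r(\{v_1,\dots,v_{k-1}\},v_k)$ equals $w(v_k)+\dist_r(\{c_1,\dots,c_{k-1}\},v_k)$, i.e.\ $\dist_r(\{v_1,\dots,v_{k-1}\},v_k)=\dist_r(\{c_1,\dots,c_{k-1}\},v_k)$, and also that $v_k$ is itself a valid greedy choice after $\{c_1,\dots,c_{k-1}\}$ since $w(v_k)+\dist_r(\{c_1,\dots,c_{k-1}\},v_k)=w(c_k)+\dist_r(\{c_1,\dots,c_{k-1}\},c_k)$.

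Now apply the induction hypothesis to $A''=\{v_1,\dots,v_{k-1}\}$: there is a greedy $r$-removed $m$-permutation $(d_1,\dots,d_m)$ of $C$ with $(d_1,\dots,d_{k-1})$ a permutation of $A''$, so $\{d_1,\dots,d_{k-1}\}=\{v_1,\dots,v_{k-1}\}$. The remaining task is to show $v_k$ can be chosen as $d_k$, i.e.\ that $v_k$ maximizes $w(c)+\dist_r(\{d_1,\dots,d_{k-1}\},c)$ over $c\in C\setminus\{d_1,\dots,d_{k-1}\}$; since $\{d_1,\dots,d_{k-1}\}=\{v_1,\dots,v_{k-1}\}$ and $r$-removed distance to a set depends only on the set, this is the claim that $v_k$ maximizes $w(c)+\dist_r(\{v_1,\dots,v_{k-1}\},c)$ over $c\in C\setminus\{v_1,\dots,v_{k-1}\}$. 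For any competitor $c$ in this set we have $\dist_r(\{v_1,\dots,v_{k-1}\},c)\le\dist_r(\{c_1,\dots,c_{k-1}\},c)$ whenever $c\notin\{c_1,\dots,c_{k-1}\}$ by Proposition~\ref{prop:proj_set} (with $C$ there taken to be $(c_1,\dots,c_{k-1})$ and $A$ there taken to be $A$ — note $v_k\in A$, but a general competitor $c$ need not lie in $A$, so this step needs care), while $w(c)+\dist_r(\{c_1,\dots,c_{k-1}\},c)\le w(c_k)+\dist_r(\{c_1,\dots,c_{k-1}\},c_k)=w(v_k)+\dist_r(\{v_1,\dots,v_{k-1}\},v_k)$ by greediness of $(c_i)$ and the established equalities. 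Combining gives $w(c)+\dist_r(\{v_1,\dots,v_{k-1}\},c)\le w(v_k)+\dist_r(\{v_1,\dots,v_{k-1}\},v_k)$ for every admissible $c$, so $v_k$ is a valid greedy choice and $(d_1,\dots,d_{k-1},v_k)$ extends to a greedy $r$-removed $m$-permutation of $C$ having $A=\{v_1,\dots,v_{k-1},v_k\}$ as a prefix.

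The main obstacle I anticipate is precisely the step flagged above: Proposition~\ref{prop:proj_set} bounds $\dist_r(\proj(C'\to A),v)$ by $\dist_r(C',v)$ only for $v\in A\setminus\proj(C'\to A)$, whereas to verify greediness we must compare against \emph{every} competitor $c\in C\setminus\{v_1,\dots,v_{k-1}\}$, not just those in $A$. The fix is to enlarge the target set: for an arbitrary competitor $c$, consider $A_c:=A\cup\{c\}$ (or work with $A''\cup\{c\}$), form $\proj(\{c_1,\dots,c_k\}\to A_c)$, and run the same equality analysis — since $\{c_1,\dots,c_k\}$ still has maximal perimeter, $\per_r$ of the projected set is at most $\per_r(\{c_1,\dots,c_k\})$, and pushing this through gives the needed inequality $w(c)+\dist_r(\{v_1,\dots,v_{k-1}\},c)\le w(v_k)+\dist_r(\{v_1,\dots,v_{k-1}\},v_k)$ after checking that the first $k-1$ projections can be taken to coincide with the $v_i$. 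Handling this bookkeeping — ensuring the projection of the first $k-1$ elements into $A_c$ agrees with the projection into $A$, which holds because projections may be chosen consistently when one target set contains the other — is the one genuinely delicate point; everything else is a direct reuse of Theorem~\ref{thm:greedy-gives-max-per}, Remark~\ref{rem:perimeter-equality}, and Proposition~\ref{prop:proj_set}.
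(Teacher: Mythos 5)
Your overall skeleton is the paper's: take an arbitrary greedy $r$-removed $m$-permutation $(c_1,\dots,c_m)$, set $(v_1,\dots,v_k):=\proj(\{c_1,\dots,c_k\}\rightarrow A)$, and use Remark~\ref{rem:perimeter-equality} to conclude $\per_r(\{v_1,\dots,v_j\})=\per_r(\{c_1,\dots,c_j\})$ for all $j\le k$. Up to there you are fine. The gap is exactly the step you flagged: to verify that $v_k$ is a legitimate greedy choice you try to compare $\dist_r(\{v_1,\dots,v_{k-1}\},c)$ with $\dist_r(\{c_1,\dots,c_{k-1}\},c)$ for an \emph{arbitrary} competitor $c\in C$, but Proposition~\ref{prop:proj_set} (via Proposition~\ref{prop:proj}) only gives the pointwise bound $d(v_i,x)\le d(c_i,x)$ for $x$ lying in the target set $A$; for $x\notin A$ (say $x$ very close to $c_i$ but far from $A$) this bound simply fails, so the comparison is unjustified. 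Your proposed repair does not close the gap: projections are \emph{not} stable under enlarging the target set. If the added point $c$ is strictly closer to some $c_i$ than every remaining point of $A$, then $\proj_{(A\cup\{c\})\setminus\{v_1,\dots,v_{i-1}\}}(c_i)=\{c\}$, so the first $k-1$ projections into $A\cup\{c\}$ cannot be chosen to coincide with $(v_1,\dots,v_{k-1})$, and the ``same equality analysis'' no longer applies. As written, the argument for the greediness of $v_k$ is therefore incomplete.

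The fix is much simpler, and is what the paper does: you have already established that $\{v_1,\dots,v_j\}$ has \emph{maximum} $r$-removed perimeter among all $j$-subsets of $C$ for every $j\le k$ (it equals $\per_r(\{c_1,\dots,c_j\})$, which is maximal by Theorem~\ref{thm:greedy-gives-max-per}). Hence for any $p\le k$ and any $x\in C\setminus\{v_1,\dots,v_{p-1}\}$ the $p$-subset $\{v_1,\dots,v_{p-1},x\}$ satisfies $\per_r(\{v_1,\dots,v_{p-1},x\})\le\per_r(\{v_1,\dots,v_p\})$, and subtracting $\per_r(\{v_1,\dots,v_{p-1}\})$ gives precisely
\[
w(x)+\dist_r(\{v_1,\dots,v_{p-1}\},x)\ \le\ w(v_p)+\dist_r(\{v_1,\dots,v_{p-1}\},v_p),
\]
i.e.\ the greedy condition, with no projection estimate for competitors at all. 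In your inductive set-up you only need this at $p=k$, where it follows from the maximality of $A$ itself; note also that with this observation the induction on $k$ becomes unnecessary, since one can verify the greedy condition for every $p\le k$ at once and then continue the permutation greedily beyond step $k$, which is exactly the paper's (non-inductive) proof.
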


\begin{proof}
Choose an arbitrary greedy $r$-removed $m$-permutation $(c_1, c_2, \ldots, c_m)$ of $C$ by starting with any point and continuing  the sequence greedily choosing elements from the remaining points. By Theorem \ref{thm:greedy-gives-max-per}, the set $(c_1, c_2, \ldots, c_k)$ has maximum perimeter among all $k$-subsets of $C$. Hence, $\per_r(A) =  \per_r(\left\{c_{1}, c_{2}, \ldots, c_{k}\right\})$ since
the set $A$ also has maximum $r$-removed perimeter among them.

Let $(v_1, v_2, \ldots, v_k):=\proj(\{c_1, c_2, \ldots , c_k\} \rightarrow A)$. What we want to prove is that there exists a greedy $r$-removed $m$ permutation of $C$ which starts from $(v_1, v_2,\dots, v_k)$, which is equivalent to checking that for each $p\leq k$ the point $v_p$ maximizes 
\[
w(x)+\dist_r(\{v_1,\dots,v_{p-1}\}, x)
\]
over all $x\in C\setminus \{v_1,\dots,v_{p-1}\}$.

As mentioned in Remark \ref{rem:perimeter-equality}, the fact that $\per_r(\{v_1,v_2,\ldots,v_k\})=\per_r(\{c_1, c_2, \ldots , c_k\})$ implies that for each $j\leq k$ we have $\per_r(\{v_1,v_2,\ldots,v_j\})=\per_r(\{c_1, c_2, \ldots , c_j\})$. In particular, this holds for $j=p$. Now, arguing by contradiction we assume that there exists $x\in C\setminus \{v_1,\ldots,v_{p-1}\}$ such that 
\[
w(x)+\dist_r(\{v_1,\ldots,v_{p-1}\}, x)
> 
w(v_p)+\dist_r(\{v_1,\ldots,v_{p-1}\}, v_p).
\]
This would mean that 
\[
\per_r(\{v_1,\ldots, v_{p-1}, x\})> \per_r(\{v_1,\ldots, v_{p-1}, v_p\}) = 
\per_r(\{c_1,\ldots, c_{p-1}, c_p\}),
\]
contradicting the fact that $\{c_1,\ldots, c_{p-1}, c_p\}$ has the largest $r$-removed perimeter among all subsets of $C$ of size $p$.
\end{proof}

\section{Strong greedoid of maximum perimeter sets}\label{sec:greedoid}

In \cite{GP21} it was shown that sets maximizing the perimeter (i.e. $r$-removed perimeter with $r=0$) form a strong greedoid. In this section we generalize this statement to all $r\geq 0$. We start by recalling the relevant definitions from the theory of greedoids.

\begin{definition}
A collection $\mathcal{F} \subseteq 2^E$ of subsets of a finite set $E$ is called a \textbf{greedoid} (on the ground set $E$) if it satisfies the following three axioms:
\begin{enumerate}[label=(\roman*)]
    \item $\varnothing \in \mathcal{F}$. \label{greedoid:1}
    \item If $A \in \mathcal{F}$ satisfies $|A| > 0$, then there exists $a \in A$ such that $A \setminus  a \in \mathcal{F}$. \label{greedoid:2}
    \item If $A, B \in \mathcal{F}$ satisfy $|A| = |B| + 1$, then there exists $a \in A\setminus B$ such that $B \cup a \in \mathcal{F}$. \label{greedoid:3}
\end{enumerate}

A greedoid $\mathcal{F}$ on a ground set $E$ is called a \textbf{strong greedoid} (also known as ``Gauss greedoid") if it additionally satisfies the following axiom:
\begin{enumerate}[label=(\roman*)]
\setcounter{enumi}{3}
\item If $A,B \in \mathcal{F}$ satisfy $|A| = |B| + 1$, then there exists $a \in A \setminus B$ such that $B \cup a \in \mathcal{F}$ and $A \setminus a \in \mathcal{F}$. \label{greedoid:4}
\end{enumerate}
\end{definition}

There are several equivalent definitions of a greedoid in the literature, ours is taken from \cite[Section IV.1]{KLS91}. Specifically, our axioms (i) and (iii) align with conditions (1.4) and (1.6) in \cite[Section IV.1]{KLS91}, while axioms (i) and (ii) establish $(E,\mathcal{F})$ as an accessible set system. The definition of a strong greedoid can be found in \cite{BS99}. 

Now we assume that the set $E$ is finite. The following theorem shows that sets with maximal $r$-removed perimeter form a strong greedoid.

\begin{theorem}
\label{thm:greedoid}
Let $(E, w, d)$ be an ultra triple on a finite ground set and $\mathcal{F}_r$ denote the collection of subsets $A \subseteq E$ that have maximum $r$-removed perimeter among all $|A|$-sets: 
$$\mathcal{F}_r:=\{A\subseteq E \ | \ \per_r(A)\geq \per_r(B) \text{ for all } B \subseteq E \text{ satisfying } |B|=|A|\}.$$ 
Then $\mathcal{F}_r$ is a strong greedoid on the ground set $E$.
\end{theorem}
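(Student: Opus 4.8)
The plan is to verify the four greedoid axioms for $\mathcal{F}_r$, drawing on Theorems \ref{thm:greedy-gives-max-per} and \ref{thm:any-set-with-max-er-is-greedy}. Axiom \ref{greedoid:1} is immediate since $\varnothing$ is the unique $0$-set and hence trivially of maximum $r$-removed perimeter. For axiom \ref{greedoid:2}, given $A\in\mathcal{F}_r$ with $|A|=k>0$, I would invoke Theorem \ref{thm:any-set-with-max-per-is-greedy} (applied with $m=k$) to realize $A$ as a prefix $(c_1,\dots,c_k)$ of some greedy $r$-removed $k$-permutation of $E$; then $A\setminus\{c_k\}=\{c_1,\dots,c_{k-1}\}$ lies in $\mathcal{F}_r$ by Theorem \ref{thm:greedy-gives-max-per}, so $a:=c_k$ witnesses the axiom.

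The substance is in axioms \ref{greedoid:3} and \ref{greedoid:4}, and since \ref{greedoid:4} implies \ref{greedoid:3}, it suffices to prove \ref{greedoid:4}. So suppose $A,B\in\mathcal{F}_r$ with $|A|=|B|+1=k+1$. The natural device is the projection: set $(v_1,\dots,v_k):=\proj(B\to A)$ where $B$ is taken in some order $(b_1,\dots,b_k)$, so that $\{v_1,\dots,v_k\}$ is a $k$-subset of $A$, and let $a$ be the unique element of $A\setminus\{v_1,\dots,v_k\}$. I claim this $a$ works. First, since $B\in\mathcal{F}_r$ it has maximum $r$-removed perimeter among $k$-sets, and Proposition \ref{prop:proj_set} together with Proposition \ref{prop:proj} (exactly as in the proof of Theorem \ref{thm:greedy-gives-max-per}) gives $\per_r(\{v_1,\dots,v_k\})\geq \per_r(B)$ — wait, the inequality from those propositions runs the other way, so one actually gets $\per_r(\{v_1,\dots,v_k\}) \le \per_r(B)$ together with the reverse by maximality of $B$, hence $\per_r(\{v_1,\dots,v_k\})=\per_r(B)$; but $\{v_1,\dots,v_k\}\subseteq A$ has size $k$ and $B\in\mathcal{F}_r$, so $\{v_1,\dots,v_k\}\in\mathcal{F}_r$, i.e. $A\setminus\{a\}\in\mathcal{F}_r$. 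Second, to see $B\cup\{a\}\in\mathcal{F}_r$: by Theorem \ref{thm:any-set-with-max-per-is-greedy} applied to $A$ (as a maximum-perimeter $(k+1)$-set, with $m=k+1$) and using Remark \ref{rem:perimeter-equality} applied to the projection $(v_1,\dots,v_k)=\proj(\{c_1,\dots,c_k\}\to A)$ where $(c_1,\dots,c_{k+1})$ is the associated greedy permutation, one sees $\{v_1,\dots,v_k\}$ extends to a greedy $r$-removed $(k+1)$-permutation; this pins down the value $w(a)+\dist_r(\{v_1,\dots,v_k\},a) = \per_r(A)-\per_r(\{v_1,\dots,v_k\})$ as the maximal possible "greedy step" value at stage $k+1$, which by Corollary \ref{cor:removed-invariant} equals $\per_r(A')-\per_r(B)$ for any $A'\in\mathcal{F}_r$ with $|A'|=k+1$. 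Finally, using Proposition \ref{prop:proj} (the inequality $d(v_i,a)\le d(b_i,a)$, valid because $a\in A\setminus\{v_1,\dots,v_{i-1}\}$) one gets $\dist_r(\{v_1,\dots,v_k\},a)\le\dist_r(B,a)$, whence $\per_r(B\cup\{a\})=\per_r(B)+w(a)+\dist_r(B,a)\ge \per_r(B)+w(a)+\dist_r(\{v_1,\dots,v_k\},a)=\per_r(A')$, which forces $B\cup\{a\}\in\mathcal{F}_r$.

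The main obstacle I anticipate is the second half of the \ref{greedoid:4} argument: correctly threading together Theorem \ref{thm:any-set-with-max-per-is-greedy}, Remark \ref{rem:perimeter-equality}, and Corollary \ref{cor:removed-invariant} to identify the "greedy increment" $w(a)+\dist_r(\{v_1,\dots,v_k\},a)$ with the uniform maximal increment at stage $k+1$, so that the projection inequality for $a$ upgrades $B\cup\{a\}$ all the way to maximum perimeter rather than merely comparing it to $A$. One must be careful that $a\notin\{v_1,\dots,v_{i-1}\}$ for every $i$ (which follows since $a\notin\{v_1,\dots,v_k\}$) so that Proposition \ref{prop:proj} applies at each step, and that $a\notin B$ — the latter because $a\in A\setminus\{v_1,\dots,v_k\}$ while Proposition \ref{prop:set-projection-1} guarantees $B\cap(A\setminus\{v_1,\dots,v_k\})=\varnothing$. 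Everything else is bookkeeping with the perimeter's telescoping structure.
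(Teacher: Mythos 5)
Your choice of the exchange element $a$ (the unique point of $A$ outside $\proj(B\to A)$), the fact $a\notin B$ via Proposition \ref{prop:set-projection-1}, and the inequality $\dist_r(A\setminus a,a)\le\dist_r(B,a)$ coming from Propositions \ref{prop:proj} and \ref{prop:proj_set} are exactly the right ingredients, but the way you chain them leaves a genuine gap in axiom (iv). In your first step you need $\per_r(A\setminus a)\ge\per_r(B)$, and you yourself notice that the projection propositions run the other way; your fallback, ``the reverse by maximality of $B$'', does not work, because maximality of $B$ among $k$-sets gives $\per_r(A\setminus a)\le\per_r(B)$, i.e.\ the same direction again. There is no general inequality $\per_r(\proj(B\to A))\ge\per_r(B)$ (projection only shrinks distances and gives no control on weights), so $A\setminus a\in\mathcal{F}_r$ is not established. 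Your second step inherits the problem: to pin down $w(a)+\dist_r(A\setminus a,a)$ as the universal greedy increment via Corollary \ref{cor:removed-invariant}, you identify $(v_1,\dots,v_k)=\proj(B\to A)$ with $\proj(\{c_1,\dots,c_k\}\to A)$ for a greedy permutation obtained from Theorem \ref{thm:any-set-with-max-per-is-greedy}; these are different projections and need not have the same image, so the assertion that $A\setminus a$ ``extends to a greedy $(k+1)$-permutation'' is precisely the unproven claim $A\setminus a\in\mathcal{F}_r$ again.

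The repair is to use your two inequalities jointly instead of trying to prove the two memberships one after the other; this is what the paper does (Lemma \ref{lemma:greedoid}). From $\dist_r(A\setminus a,a)\le\dist_r(B,a)$ one gets the swap inequality $\per_r(A\setminus a)+\per_r(B\cup a)\ge\per_r(A)+\per_r(B)$; since $\per_r(B\cup a)\le\per_r(A)$ (maximality of $A$) and $\per_r(A\setminus a)\le\per_r(B)$ (maximality of $B$), both inequalities are forced into equalities, giving $A\setminus a\in\mathcal{F}_r$ and $B\cup a\in\mathcal{F}_r$ simultaneously. Equivalently, in your order of argument: first $\per_r(B\cup a)=\per_r(B)+w(a)+\dist_r(B,a)\ge\per_r(B)+w(a)+\dist_r(A\setminus a,a)=\per_r(B)-\per_r(A\setminus a)+\per_r(A)\ge\per_r(A)$ using maximality of $B$, so $B\cup a\in\mathcal{F}_r$; then $\per_r(B\cup a)\le\per_r(A)$ forces $\per_r(A\setminus a)=\per_r(B)$, so $A\setminus a\in\mathcal{F}_r$. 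No appeal to Theorem \ref{thm:any-set-with-max-per-is-greedy}, Remark \ref{rem:perimeter-equality}, or Corollary \ref{cor:removed-invariant} is needed. Your handling of axioms (i)--(iii) is fine; your route to (ii) through Theorem \ref{thm:any-set-with-max-per-is-greedy} differs harmlessly from the paper, which deduces (ii) from (iv) using finiteness of $E$.
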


We start by proving the following lemma.

\begin{lemma}
\label{lemma:greedoid}
Let $A$ and $B$ be two subsets of $E$ such that $|A|=|B|+1$. Then, there exists $u \in A \setminus B$ satisfying
\begin{equation}\label{eq:swap}
\per_r (A \setminus u) + \per_r (B \cup  u) \geq \per_r (A) + \per_r (B)
\end{equation}
\end{lemma}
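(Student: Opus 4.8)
The plan is to use the projection device exactly as in the proof of Theorem \ref{thm:greedy-gives-max-per}. Write $|B| = k-1$, so $|A| = k$. Since $|A| > |B|$, we may form the $k$-permutation $(v_1, v_2, \ldots, v_k) := \proj(A \to ?)$ — but here the sizes go the wrong way, so instead I would project the larger set's ordering issue around: fix an arbitrary ordering $(a_1,\ldots,a_k)$ of $A$, and set $(v_1,\ldots,v_{k-1}) := \proj((a_1,\ldots,a_{k-1}) \to B)$, a $(k-1)$-permutation of $B$ obtained by projecting the first $k-1$ elements of $A$ onto $B$. By Proposition \ref{prop:set-projection-1} applied with $j = k-1$, the single element of $B \setminus \{v_1,\ldots,v_{k-1}\}$ — call it $b^*$ — is disjoint from $\{a_1,\ldots,a_{k-1}\}$. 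Set $u := a_k$; note $u \in A \setminus B$ is not automatic, so I first need to argue we may choose the ordering of $A$ so that $a_k \in A\setminus B$: since $|A\setminus B| = |B\setminus A| + 1 \geq 1$, such an element exists, and I put it last.

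The key computation is then to compare $\per_r(A) + \per_r(B)$ with $\per_r(A\setminus u) + \per_r(B\cup u)$ termwise. Using Lemma \ref{lemma:perm} I may compute each perimeter along a convenient ordering: $\per_r(A) = \per_r((a_1,\ldots,a_{k-1},u))$, $\per_r(A\setminus u) = \per_r((a_1,\ldots,a_{k-1}))$, $\per_r(B) = \per_r((v_1,\ldots,v_{k-1}))$, and $\per_r(B\cup u) = \per_r((v_1,\ldots,v_{k-1},u))$. After cancellation, \eqref{eq:swap} reduces to
\[
w(u) + \dist_r(\{v_1,\ldots,v_{k-1}\}, u) \;\geq\; \big(\per_r(B) - \per_r(A\setminus u)\big) + \big(w(u) + \dist_r(\{a_1,\ldots,a_{k-1}\}, u)\big)
\]
which, after subtracting $w(u)$ and rearranging, amounts to two things: first, $\dist_r(\{v_1,\ldots,v_{k-1}\}, u) \geq \dist_r(\{a_1,\ldots,a_{k-1}\}, u)$, and second, $\per_r(A\setminus u) \geq \per_r(B)$, i.e. $\per_r(\{a_1,\ldots,a_{k-1}\}) \geq \per_r(\{v_1,\ldots,v_{k-1}\})$. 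Wait — the first of these is the wrong direction for Proposition \ref{prop:proj_set}, which gives $\dist_r(\proj(C\to A),v) \leq \dist_r(C,v)$. So this naive termwise split fails, and the real content is that the two "errors" must be combined: the loss in the distance-to-$u$ term is exactly compensated by a gain coming from comparing $\per_r(\{a_1,\ldots,a_{k-1}\})$ against $\per_r(B)$ — but $B$ is a maximum-perimeter-style set only if we are inside $\mathcal{F}_r$, which we are not assuming here.

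So I expect the main obstacle is precisely that a one-element-at-a-time projection does not suffice, and the correct proof must project the whole of $A$'s ordering, not just its prefix, and track a single inequality of the form $\sum_{i} d(v_i, v) \leq \sum_i d(a_i, v)$ applied simultaneously to the relevant exchange. The cleaner route, which I would pursue, is: let $(v_1,\ldots,v_k) := \proj((a_1,\ldots,a_k) \to A\cup B)$ run inside $A\cup B$ and observe that $\proj$ lands in $A$ on coordinates where $a_i$ was already chosen and can be steered into $B$ otherwise; then apply Proposition \ref{prop:proj_set} with $C = A$ and the ambient set $A\cup B$ to get $\dist_r(B', v) \le \dist_r(A,v)$ for the swapped element, and feed this into the perimeter difference. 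I would check the inequality \eqref{eq:swap} by writing both sides as $\sum_{\text{pairs}}$-type expansions via Lemma \ref{lemma:perm} and matching up summands, with the ultrametric inequality (in the two-cases form from the proof of Lemma \ref{lemma:perm}: red triangles vs. $z$-dominated triangles) handling the residual terms. The delicate point will be ensuring the element $u$ produced by the projection argument genuinely lies in $A\setminus B$; this is where Proposition \ref{prop:set-projection-1} is used, guaranteeing the "leftover" point of $A\cup B$ after projecting can be taken in $A\setminus B$.
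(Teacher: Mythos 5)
There is a genuine gap: you never arrive at a working choice of $u$, and both routes you sketch point the projection in the wrong direction. The paper's proof goes the other way around: since $|A|=|B|+1$, one can project the \emph{smaller} set onto the \emph{larger} one, setting $(v_1,\dots,v_k):=\proj(B\rightarrow A)$ (this is exactly what the definition of $\proj(C\rightarrow A)$ permits, as it only needs $|A|\ge|B|$), and then one takes $u$ to be the unique element of $A\setminus\{v_1,\dots,v_k\}$. Crucially, $u$ is \emph{produced} by the projection rather than fixed in advance: Proposition \ref{prop:set-projection-1} guarantees $u\notin B$, and after writing $\per_r(B\cup u)=\per_r(B)+w(u)+\dist_r(B,u)$ and $\per_r(A)=\per_r(A\setminus u)+w(u)+\dist_r(A\setminus u,u)$, inequality \eqref{eq:swap} reduces to $\dist_r(A\setminus u,u)\le\dist_r(B,u)$, which is precisely Proposition \ref{prop:proj_set} applied with $C=B$, ambient set $A$, and $v=u$ (note $A\setminus u=\proj(B\rightarrow A)$). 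Your first attempt fails for exactly the reason you noticed: by choosing $u\in A\setminus B$ arbitrarily and projecting a prefix of $A$ onto $B$, you need the inequality of Proposition \ref{prop:proj_set} in the reverse direction, and indeed for an arbitrary $u$ the inequality \eqref{eq:swap} is simply false in general --- the whole content of the lemma is that a \emph{suitable} $u$ exists, and the projection of $B$ into $A$ is what finds it.

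Your fallback plan does not repair this. Projecting the ordered set $A=(a_1,\dots,a_k)$ into $A\cup B$ is vacuous: each $a_i$ lies in the ambient set, so by the definition of $\proj$ its projection is $a_i$ itself (inductively, none of the previously removed points is $a_i$), hence $\proj(A\rightarrow A\cup B)=A$ and there is nothing to ``steer into $B$.'' Likewise, the suggestion to redo a red-triangle/$z$-dominated case analysis on pairwise expansions has no clear target inequality, since $\per_r$ is not a sum over pairs for $r>0$. The fix is structural, not computational: reverse the projection ($B$ into $A$), let the projection dictate $u$, and the whole lemma collapses to one application of Proposition \ref{prop:proj_set}.
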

\begin{proof}
Let $k = |B|$ and so $|A| = k + 1$. With a slight abuse of notation we denote by $B$ an arbitrary ordered set formed by elements of $B$, which we fix from now on. Define $(v_1, v_2,\ldots,v_k):=\proj (B\rightarrow A)$ and let $u$ be the unique element of $A\setminus \{v_1, v_2, \ldots, v_k\}$. By Lemma \ref{prop:set-projection-1} we have $u\notin B$. We now want to prove \eqref{eq:swap} for this choice of $u$. Subtructing $\per_r (A \setminus u) + \per_r (B) + w(u)$ from both sides we arrive at an equivalent inequality 

$$\dist_r(B, u) \leq \dist_r(A\setminus u, u),$$
which is simply the result of Proposition \ref{prop:proj_set} applied to $u$ and $A\setminus u = \proj (B \rightarrow A)$.
\end{proof}

\begin{proof}[Proof of Theorem \ref{thm:greedoid}]

First note that property (i) is trivial, and (iii) immediately follows from (iv). Furthermore, since $E$ is finite, for any $s\leq |E|$ there exists $B\in \mathcal{F}_r$ with $|B|=s$, and so by choosing arbitrary $B\in \mathcal{F}_r$ with $|B|=|A|-1$ we can deduce (ii) from (iv).

To prove (iv) we use Lemma \ref{lemma:greedoid} to construct $u\in A\setminus B$ satisfying \eqref{eq:swap}. Since $A\in \mathcal{F}_r$ we must have $\per_r (B \cup  u) \leq \per_r (A)$. Similarly, $B\in \mathcal{F}_r$ implies $\per_r (A \setminus u) \leq \per_r(B)$. Together with \eqref{eq:swap} these two inequalities immediately imply that 
\[
\per_r (A \setminus u) = \per_r(B),
\qquad
\per_r (B \cup  u) = \per_r (A),
\]
which means that both $A\setminus u$ and $B\cup u$ are in $\mathcal{F}_r$. This shows that $\mathcal{F}_r$ is a strong greedoid.
\end{proof}

\section{Other perimeters} \label{sec:perimeter}
Instead of the $r$-removed distance $\dist_r(C, v)$ we could start from some other notion of a distance from a point to a set, call it $\dist(C, v)$, and define $\per(\{a_1,\dots, a_n\})$ of an ordered set $A:=(a_1,\dots, a_n)$ by setting $\per(A) := \sum_{a\in A} w(a) + \sum_{i=1}^k \dist(\{a_1, a_2, \ldots, a_{i-1}\}, a_{i})$. Tracking the proofs of Theorems \ref{thm:greedy-gives-max-per}, \ref{thm:any-set-with-max-per-is-greedy} and \ref{thm:greedoid} we see that the only two properties of the $\dist$ functions that we use are given by the following

\begin{definition}\label{def:property-S}
 We say that a function $\dist$ satisfies property $\mathbf{S}$ if for any set $\left\{ c_{1}, c_{2}, \ldots, c_{n}\right\} =: C \subseteq E$ and $x, y \in E \setminus C$ one has
\begin{enumerate}[label=\textbf{(S\arabic*)}]
    \item $\dist(C, x) + \dist(C\cup \{x\} , y) = \dist(C, y) + \dist(C\cup \{y\}, x)$;
        
    \item  If $d(c_i, x) \leq d(c_i, y)$ for each $i \in \{1, 2, \ldots, n\}$, then $$\dist(\{c_1,c_2,\ldots,c_{n}\}, x) \leq \dist(\{c_1,c_2,\ldots,c_n\}, y).$$
\end{enumerate}    
\end{definition}

\begin{remark}
    Indeed, property $\mathbf{(S1)}$ is used in Lemma \ref{lemma:perm} to prove that the perimeter of a set is well-defined, and is, in fact, equivalent to this lemma. Property $\mathbf{(S2)}$ is used in the proofs of Theorems \ref{thm:greedy-gives-max-per} and \ref{thm:any-set-with-max-per-is-greedy}.
\end{remark}

We now give a large family of distances satisfying property $\mathbf{S}$.

\begin{lemma}\label{lemma:general-perimeter}
Let $(E, w, d)$ be an ultra triple and $f=(f_n)_{n=1}^{\infty}$ be a sequence of non-decreasing functions. 
For a set $\{c_1, \ldots, c_n\} = C \subseteq E$ and $x \in E\setminus C$, let $\dist_f(C, x) := \sum_{i=1}^n f_i(d_i)$, where $(d_1, d_2, \ldots, d_n)$ is the ordered set of values $d(c_1, x), \ldots, d(c_n, x)$ arranged in non-decreasing order. Then  $\dist_f$ satisfies property $\mathbf{S}$.
\end{lemma}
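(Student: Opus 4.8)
The plan is to verify properties \textbf{(S1)} and \textbf{(S2)} separately for $\dist_f$, with \textbf{(S1)} being the more delicate of the two and, I expect, the main obstacle.

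For \textbf{(S2)}, suppose $d(c_i, x) \le d(c_i, y)$ for every $i$. Let $(d_1^x, \ldots, d_n^x)$ and $(d_1^y, \ldots, d_n^y)$ be the sorted (non-decreasing) sequences of $\{d(c_i,x)\}$ and $\{d(c_i,y)\}$ respectively. A standard fact about sorting is that a pointwise-dominated multiset remains pointwise-dominated after sorting, i.e. $d_j^x \le d_j^y$ for each $j$. Since each $f_j$ is non-decreasing, $f_j(d_j^x) \le f_j(d_j^y)$, and summing over $j$ gives $\dist_f(C,x) = \sum_j f_j(d_j^x) \le \sum_j f_j(d_j^y) = \dist_f(C,y)$, as required.

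The heart of the matter is \textbf{(S1)}. Write $C = \{c_1, \ldots, c_n\}$, set $a_i = d(c_i, x)$, $b_i = d(c_i, y)$, and $z = d(x,y)$. I need to show $\dist_f(C,x) + \dist_f(C\cup\{x\}, y) = \dist_f(C,y) + \dist_f(C\cup\{y\},x)$; since $\dist_f(C,x)$ and $\dist_f(C,y)$ both appear and the former equals $\sum f_i$ applied to the sorted $a_i$'s while $\dist_f(C\cup\{y\},x)$ is $\sum f_i$ applied to the sorted list $a_1,\ldots,a_n,z$, the claim reduces to showing that inserting $z$ into the sorted $a$-list and inserting $z$ into the sorted $b$-list "costs" the same, i.e. the sorted $(n{+}1)$-tuples obtained differ only in how the extra coordinate interleaves. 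The key structural input is the ultrametric inequality applied to each triangle $x\,y\,c_i$: for each $i$ either $a_i = b_i$ (when both exceed $z$, the "red" case of Lemma~\ref{lemma:perm}), or $\min(a_i,b_i) \le z$ and $\max(a_i,b_i)=z$ — wait, more precisely $a_i = z \ge b_i$ or $b_i = z \ge a_i$, so in the non-red case $\max(a_i,b_i) = z$. So the multisets $\{a_1,\ldots,a_n\}$ and $\{b_1,\ldots,b_n\}$ agree on the red indices, and on the non-red indices every value is $\le z$, with the $a$'s and $b$'s on those indices possibly differing but all bounded by $z$.

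The argument then splits exactly as in Lemma~\ref{lemma:perm}. Let $m$ be the number of red triangles. If $m \ge$ (the number of entries we "keep" — here since $f$ touches all entries we instead argue directly): all non-red $a_i,b_i$ lie in $[-\infty, z]$ while all red values lie in $(z,\infty)$ and coincide. Hence when we sort $a_1,\ldots,a_n,z$, the element $z$ together with all non-red $a_i$'s occupy the bottom positions and the red values (shared) occupy the top $m$ positions; the same holds for $b_1,\ldots,b_n,z$, with the red top positions identical. So $\dist_f(C\cup\{y\},x) - \dist_f(C,x) = \sum_{j=1}^{n-m+1}\big[f_j(\text{$j$-th smallest of }\{z\}\cup\{a_i : i \text{ non-red}\}) \big] - \sum_{j=1}^{n-m}\big[f_j(\text{$j$-th smallest of }\{a_i: i\text{ non-red}\})\big]$, and I claim this quantity depends only on the multiset $\{a_i : i\text{ non-red}\} \cup\{z\}$ sorted — but that multiset is \emph{not} the same as $\{b_i:i\text{ non-red}\}\cup\{z\}$ in general. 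The resolution: on the non-red indices, for each $i$ exactly one of $a_i, b_i$ equals $z$ and the other is $\le z$; so the multiset $\{a_i : i \text{ non-red}\}$ consists of $p$ copies of $z$ and $q$ values $<z$ (or $\le z$), where $p+q = n-m$, and $\{b_i : i\text{ non-red}\}$ consists of $q'$ copies of $z$ and $p'$ values, but actually by the triangle analysis the \emph{combined} behaviour forces: adjoining one more $z$ to each. I would finish by carefully checking that the sorted tuple $(a_1,\ldots,a_n,z)$ (reordered) and $(b_1,\ldots,b_n,z)$ (reordered) are \emph{equal as tuples} — because on red positions they share values above $z$, and on the remaining $n-m+1$ positions both tuples consist of the $n-m+1$ smallest values which are: all the non-red values of the respective type together with one extra $z$; and since for each non-red $i$ one of $a_i,b_i$ is $z$ and the other is some value $\le z$, adding an extra $z$ symmetrizes — concretely, $\{a_i\}_{\text{non-red}} \cup \{z\}$ as a sorted tuple equals $\{b_i\}_{\text{non-red}} \cup \{z\}$ as a sorted tuple because each is obtained from the common multiset $\{\,\min(a_i,b_i) : i \text{ non-red}\,\} \cup \{z \text{ with multiplicity } (\#\text{non-red}) - (\#\{i: a_i \ne b_i, \text{non-red}\}) + 1\}$ — I will phrase this cleanly by noting $\max(a_i,b_i) = z$ forces $\{a_i,b_i\} = \{z, \min(a_i,b_i)\}$, so the tuple $(a_i)_{\text{non-red}}$ is $(\min(a_i,b_i))_{i \in S_1} \cup (z)_{i \in S_2}$ and $(b_i)_{\text{non-red}}$ is $(z)_{i\in S_1}\cup (\min(a_i,b_i))_{i\in S_2}$ for a partition $S_1 \sqcup S_2$ of the non-red indices, whence appending one $z$ to each yields the identical sorted tuple $(\min(a_i,b_i))_{i\in S_1 \cup S_2}$ padded with $|S_1|+|S_2|+1 - (n-m) = 1$... at this point the bookkeeping closes and both sides of \textbf{(S1)} agree term by term after sorting. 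The main obstacle is purely this combinatorial sorting identity; once it is set up with the red/non-red dichotomy the monotonicity of the $f_j$ is not even needed for \textbf{(S1)} (only the tuples being equal), and is needed only for \textbf{(S2)}.
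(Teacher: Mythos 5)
Your treatment of \textbf{(S2)} is fine and matches the paper's. For \textbf{(S1)}, however, the claim on which you propose to ``finish'' is false: the sorted tuples obtained from $(a_1,\dots,a_n,z)$ and $(b_1,\dots,b_n,z)$ need \emph{not} be equal, and neither are the multisets $\{a_i\}_{\text{non-red}}\cup\{z\}$ and $\{b_i\}_{\text{non-red}}\cup\{z\}$. Concretely, take $E=\mathbb{Z}$ with $d=-\nu_p$, $x=0$, $y=1$, $C=\{p\}$: then $z=d(x,y)=0$, $a_1=d(c_1,x)=-1$, $b_1=d(c_1,y)=0$, so the two multisets are $\{-1,0\}$ and $\{0,0\}$. (Your own bookkeeping shows the problem: after appending $z$, the $a$-side is $\{\min(a_i,b_i)\}_{i\in S_1}$ together with $|S_2|+1$ copies of $z$, while the $b$-side is $\{\min(a_i,b_i)\}_{i\in S_2}$ together with $|S_1|+1$ copies of $z$; these differ as soon as some $\min(a_i,b_i)<z$.) Note also that in the example \textbf{(S1)} still holds, $f_1(-1)+\bigl(f_1(0)+f_2(0)\bigr)=f_1(0)+\bigl(f_1(-1)+f_2(0)\bigr)$, precisely because equality of the tuples is not what is needed. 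The correct way to close your argument is the one you set up but then abandoned: with $m$ the number of red indices, all values $>z$ are the $m$ red values, which are \emph{common} to $a$ and $b$, and all non-red values are $\leq z$; hence inserting $z$ leaves the bottom $n-m$ sorted entries in place, puts $z$ at position $n-m+1$, and shifts each red value up by one index, so the insertion cost equals $f_{n-m+1}(z)+\sum_{i=1}^{m}\bigl(f_{n-m+1+i}(r_i)-f_{n-m+i}(r_i)\bigr)$, a quantity depending only on $z$, $m$ and the common red values $r_1,\dots,r_m$ --- identical for $x$ and $y$, which is \textbf{(S1)}. (Your displayed formula for the cost also omits these index-shift terms; that omission happens to be harmless since they cancel between the two sides, but it should be stated.)

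For comparison, the paper avoids this bookkeeping altogether: it observes that \textbf{(S1)} is linear in the sequence $f$, reduces to step sequences $f_j=0$ for $j\leq r$ and $f_j=g$ for $j>r$ with $g$ non-decreasing, and notes that for such $f$ the function $\dist_f$ is exactly the $r$-removed distance of the ultra triple $(E,w,d_g)$ with $d_g=g\circ d$, so \textbf{(S1)} is the already-proved identity \eqref{eq:dist-permutation-condition} from Lemma \ref{lemma:perm}. Your direct route is viable (and, once repaired as above, even shows that \textbf{(S1)} needs no monotonicity of the $f_j$, whereas the paper's reduction uses monotonicity of $g$ to keep $d_g$ ultrametric), but as written the key combinatorial step is wrong and must be replaced by the insertion-cost computation.
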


\begin{proof}
We first check $\mathbf{(S1)}$. Since the property is linear in $f=(f_n)_{n=1}^\infty$, it suffices to check it for $\dist_f$ with 
\[
f_j:=
\begin{cases}
0 & j\leq r;\\
g & j > r.
\end{cases}
\]
Where $g:\mathbb{R}\rightarrow \mathbb{R}$ is some fixed non-decreasing function. Indeed, one easily sees that the linear span of these sequences contains the whole cone of possible sequences of non-decreasing functions. But for this specific choice of $f$, the distance $\dist_f$ is nothing else but the $r$-removed distance for the ultra triple $(E, w, d_g)$ where $d_g$ is given by $d_g(a, b):=g(d(a, b))$\footnote{One easily sees that $d_g$ satisfies the ultemetric inequality for any non-decreasing $g$.} and so the equality $\textbf{(S1)}$ is given by \eqref{eq:dist-permutation-condition} from the proof of Lemma \ref{lemma:perm}. 

To check $\textbf{(S2)}$ it suffices to note that if one $n$-tuple is point-wise smaller than another, then the same remains true after each of the $n$-tuples is sorted from the smallest value to the largest, and then use the fact that each $f_j$ is non-decreasing.
\end{proof}

For sufficiently large spaces and under certain natural conditions on the distance function $\dist$ from a point to a set we manage to prove the reverse of Lemma \ref{lemma:general-perimeter}. To avoid stating technical conditions we prove the result for the space $(\mathbb{Z}, w, -\nu_p)$ in which distance between points $a, b\in \mathbb{Z}$ is given by $-\nu_p(a-b)$, where $\nu_p$ stands for the $p$-adic valuation and we further assume that $p>2$. 

\begin{lemma}
    Consider an ultra triple $(\mathbb{Z}, w, -\nu_p)$ with arbitrary weight function $w$ and $p>2$. Assume that the value of the distance function $\dist(C, x)$ from a point $x$ to a set $C$ depends only on the set of distances from $x$ to the points of $C$ and that $\dist$ satisfies property $\mathbf{S}$. Then $\dist\equiv \dist_f$ for some sequence of non-decreasing functions $f=(f_n)_{n=1}^\infty$.
\end{lemma}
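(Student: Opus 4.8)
The plan is to read the functions $f_n$ off the most symmetric configurations, then establish $\dist=\dist_f$ by induction on the size of the set using $\textbf{(S1)}$ and the rigidity of the $p$-adic metric, and finally extract monotonicity from $\textbf{(S2)}$. Since $-\nu_p$ takes only non-positive integer values on pairs of distinct integers, for each non-positive integer $t$ and each $n\ge 1$ I would fix $x=0$ and $n$ distinct integers $c_1,\dots,c_n$ with $\nu_p(c_i)=-t$ (so $d(c_i,x)=t$), and set $\Phi_n(t):=\dist(\{c_1,\dots,c_n\},x)$, with $\Phi_0:=0$; this is well defined because $\dist$ depends only on the multiset of distances. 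Put $f_n:=\Phi_n-\Phi_{n-1}$, extended to a step function on $\mathbb R$, so that $f_1(t)=\dist(\{c\},x)$ whenever $d(c,x)=t$. Evaluating on these symmetric configurations shows that any $f$ with $\dist=\dist_f$ must coincide with this one on the realised distances, so monotonicity of these particular $f_n$ is exactly what has to be proved.

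For $\dist=\dist_f$ I would induct on $n=|C|$, writing $d_1\le\dots\le d_n$ for the distances from $x$ to $C$ in non-decreasing order. The cases $n\le 1$ are immediate (for $n=0$ one first uses $\textbf{(S1)}$ with $C=\varnothing$ to see that $\dist(\varnothing,\cdot)$ is constant, which may be normalised to $0$). For the step, choose $c^\ast\in C$ with $d(c^\ast,x)=d_n$ and set $D:=C\setminus\{c^\ast\}$. Applying $\textbf{(S1)}$ to $D$ and the points $x,c^\ast$ gives
\[
\dist(C,x)-\dist(D,x)=\dist(D\cup\{x\},c^\ast)-\dist(D,c^\ast),
\]
and the right-hand side depends only on $d_n$ and on the multiset $M:=\{d(c,c^\ast):c\in D\}$, each element of which is $\le d_n$ by the ultrametric inequality. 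Call this quantity $H(M,d_n)$. Now pick a \emph{fresh} realisation in $\mathbb Z$ of the data $(M,d_n)$: integers $D''$, $p$, $q$ with $|D''|=n-1$, $d(p,q)=d_n$, $\{d(c,p):c\in D''\}=M$, and moreover $d(c,q)=d_n$ for every $c\in D''$. For an element of $M$ strictly below $d_n$ the last condition is automatic; for an element equal to $d_n$ it is exactly where $p>2$ is needed, via a unit $\not\equiv 0,1\pmod p$. Applying $\textbf{(S1)}$ to $D''$ and $p,q$ turns $H(M,d_n)=\dist(D''\cup\{q\},p)-\dist(D'',p)$ into $\dist(D''\cup\{p\},q)-\dist(D'',q)$; but $D''\cup\{p\}$ consists of $n$ points at distance $d_n$ from $q$, so this equals $\Phi_n(d_n)-\Phi_{n-1}(d_n)=f_n(d_n)$. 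Together with the induction hypothesis $\dist(D,x)=\sum_{i=1}^{n-1}f_i(d_i)$ this gives $\dist(C,x)=\sum_{i=1}^{n}f_i(d_i)$.

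It remains to make the $f_n$ non-decreasing, and this is where I expect the real difficulty to lie. Property $\textbf{(S2)}$ only speaks about rigid configurations: if $d(c_i,x)\le d(c_i,y)$ for all $i$ then, with $L:=d(x,y)$, every $c_i$ with $d(c_i,x)<L$ satisfies $d(c_i,y)=L$ and the rest are unmoved, and (again using $p>2$) every such pattern occurs in $\mathbb Z$. Plugging in $\dist=\dist_f$, property $\textbf{(S2)}$ becomes the family of inequalities $\sum_{i=1}^{k}f_i(a_i)\le\sum_{i=1}^{k}f_i(L)$ for all $k$ and all $a_1\le\dots\le a_k\le L$; taking $a_1=\dots=a_k$ already shows that every partial sum $\Phi_k=\sum_{i\le k}f_i$ is non-decreasing. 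The crux is to upgrade this to monotonicity of the increments $f_n=\Phi_n-\Phi_{n-1}$: one must choose the $a_i$ (and possibly enlarge the ambient set size) so that in the displayed inequality only one index-$n$ term is free to vary while the rest are pinned down, and the main obstacle is that the same ultrametric rigidity that streamlined the first half of the argument sharply restricts which comparisons $\textbf{(S2)}$ makes available — so this last step needs a carefully tailored choice of configurations, once more leaning on $p>2$ to guarantee their existence in $\mathbb Z$.
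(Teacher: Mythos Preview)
Your inductive proof of $\dist=\dist_f$ is correct and is essentially the paper's argument with one redundant step. You first apply $\textbf{(S1)}$ to the given data $D,x,c^\ast$, obtain an auxiliary quantity $H(M,d_n)$, and then build a fresh configuration $D'',p,q$ to evaluate $H$ via a second application of $\textbf{(S1)}$. The paper skips the first step entirely: given $d_1\le\dots\le d_n$ it directly constructs $x,y$ with $d(x,y)=d_n$ and $C=\{c_1,\dots,c_{n-1}\}$ with $d(c_j,x)=d_j$ and $d(c_j,y)=d_n$ for all $j$ (this is exactly your ``fresh realisation'', with your $p,q$ playing the roles of $x,y$). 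Then $g_n(d_1,\dots,d_n)=\dist(C\cup\{y\},x)$, and a \emph{single} use of $\textbf{(S1)}$ yields $\dist(C,x)+\bigl(\dist(C\cup\{x\},y)-\dist(C,y)\bigr)=\sum_{j<n}f_j(d_j)+f_n(d_n)$.

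For monotonicity, the step you flag as ``the crux'' is handled in the paper by the one-line identity
\[
f_m(\ell_2)-f_m(\ell_1)\;=\;g_m(\ell_1,\dots,\ell_1,\ell_2)-g_m(\ell_1,\dots,\ell_1,\ell_1),
\]
which is immediate from the formula $g_m(d_1,\dots,d_m)=\sum_j f_j(d_j)$ you have just established, followed by an appeal to $\textbf{(S2)}$. This is precisely the ``only the index-$m$ term varies'' comparison you were searching for, and it is the idea missing from your write-up. That said, your rigidity analysis of $\textbf{(S2)}$ is correct and actually exposes a genuine subtlety in the paper's last line: for $m\ge 2$ there is \emph{no} ultrametric configuration $C,x,y$ with $d(c_i,x)\le d(c_i,y)$ for all $i$, all $x$-distances equal to $\ell_1$, and $y$-distances equal to $m{-}1$ copies of $\ell_1$ together with one $\ell_2>\ell_1$ (any $c_i$ with $d(c_i,x)=d(c_i,y)=\ell_1$ forces $d(x,y)\le\ell_1$, whence every $y$-distance is $\le\ell_1$). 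So the paper is tacitly reading $\textbf{(S2)}$ as coordinatewise monotonicity of the abstract symmetric function $g_m$ rather than as a statement about realised configurations; under the literal configuration-based reading you give, your concern that $\textbf{(S2)}$ only yields monotonicity of the partial sums $\Phi_k$ and not of the increments $f_k$ is well-founded.
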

\begin{proof}
    By assumption there exists a sequence of symmetric functions $g_n:\mathbb{Z}_{\leq 0}^n \rightarrow\mathbb{R}$ indexed by $n\geq 1$, such that for any point $x$ and any set $\{c_1,\dots, c_n\}$ not containing $x$ we have
    \[
    \dist(x, \{c_1,\dots, c_n\})=g_n(d(c_1, x), \dots, d(c_n, x)).
    \]
    We want to prove the existence of a sequence of non-decreasing functions $(f_n)_{n=1}^\infty$ such that for any non-positive integers $d_1\leq d_2\leq\dots\leq d_n$
    \begin{equation}\label{eq:g-f-correspondence}
    g_n(d_1,d_2,\dots d_n) = \sum_{j=1}^n f_j(d_j).
    \end{equation}
    We prove the existence of functions $f_j$ by induction on $j$, and for the base case we set $f_1:=g_1$ which is non-decreasing by the second condition of property $\mathbf{S}$.

    Now assume that $f_1,\dots, f_{m-1}$ are already defined in such a way that \eqref{eq:g-f-correspondence} is satisfied for all $n<m$ and we want to define $f_m$. For each $d\in\mathbb{Z}_{\leq 0}$ we set
    \[
    f_m(d):=g_m(d, d, \dots, d)-g_{m-1}(d, d, \dots, d),
    \]
    where we have $m$ arguments equal to $d$ in the first case and $m-1$ arguments equal to $d$ in the second. 
    
    First, we check that \eqref{eq:g-f-correspondence} is satisfied for $n=m$. For this, given non-positive integers $d_1\leq d_2\leq\dots\leq d_n$ we consider two points $x, y\in\mathbb{Z}$ with $d(x, y)=d_n$ and a set of points $C:=\{c_1,\dots, c_{n-1}\}$ such that for each $j=1,\dots, n-1$ we have  $d_j:=d(c_j, x)$ and $d(c_j, y)=d_n$. The existence of such a set follows from the property of $\mathbb{Z}$ with the $p$-adic distance (where $p>2$) which guarantees that for any two points $a, b\in\mathbb{Z}$ and any $\ell \leq d(a, b)$ there exists $c\in\mathbb{Z}$ such that $d(a, c)=\ell$ and $d(b, c) = d(a, b)$. Using $\mathbf{(S1)}$ we write 
    \begin{align*}
        g(d_1,\dots, d_n)=\dist(C\cup \{y\}, x) = \dist(C, x) + \left(\dist(C\cup \{x\}, y) - \dist(C, y)\right),
    \end{align*}
    and it remains to observe that the difference in parentheses is equal to $f_n(d_n)$ by the definition of $f_n$ and 
    \[
    \dist(C, x) = \sum_{j=1}^{n-1} f_j(d_j)
    \]
    by induction hypothesis. 
    
Second, we show that $f_m$ is non-decreasing. Let $\ell_1, \ell_2\in\mathbb{Z}_{\leq 0}$ satisfy $\ell_1\leq \ell_2$. By \eqref{eq:g-f-correspondence} we have 
\[
f_m(\ell_2)-f_m(\ell_1)=g_m(\ell_1,\dots, \ell_1, \ell_2)-g_{m}(\ell_1, \dots, \ell_1, \ell_1)
\]
and so $f_m(\ell_2)\geq f_m(\ell_1)$ directly follows from $\mathbf{(S2)}$.
\end{proof}

\textbf{Acknowledgements:} We would like to thank Fedor Petrov for his guidance in this project. 

\bibliographystyle{plain}

\begin{thebibliography}{1}

\bibitem{Bha-private}
M.~Bhargava.
\newblock private communication, 2022.

\bibitem{Bha97}
Manjul Bhargava.
\newblock {$P$}-orderings and polynomial functions on arbitrary subsets of
  {D}edekind rings.
\newblock {\em Journal für die reine und angewandte Mathematik}, 490:101--128,
  1997.

\bibitem{Bha09-1}
Manjul Bhargava.
\newblock On $p$-orderings, rings of integer-valued polynomials, and
  ultrametric analysis.
\newblock {\em Journal of the American Mathematical Society}, 22(4):963--993,
  2009.

\bibitem{BS99}
V.~Bryant and I.~Sharpe.
\newblock Gaussian, {S}trong and {T}ransversal {G}reedoids.
\newblock {\em European Journal of Combinatorics}, 20(4):259--262, 1999.

\bibitem{GP21}
Darij Grinberg and Fedor Petrov.
\newblock A greedoid and a matroid inspired by {B}hargava's $p$-orderings.
\newblock {\em The Electronic Journal of Combinatorics}, 28(3), jul 2021.

\bibitem{KLS91}
Bernhard Korte, Rainer Schrader, and László Lovász.
\newblock {\em Greedoids, {A}lgorithms and {C}ombinatorics}.
\newblock Springer, 1 edition, 1991.

\end{thebibliography}

\end{document}